\crefname{section}{section}{sections}
\crefname{subsection}{subsection}{subsections}
\Crefname{section}{Section}{Sections}
\Crefname{subsection}{Subsection}{Subsections}
\Crefname{figure}{Figure}{Figures}
\newtheorem{theorem}{Theorem}
\newtheorem{lemma}[theorem]{Lemma}
\newtheorem{remark}[theorem]{Remark}
\DeclareMathOperator{\diam}{diam}
\DeclareMathOperator{\dist}{dist}
\newcommand{\R}{\mathbb{R}}
\newcommand{\N}{\mathbb{N}}
\newcommand{\CF}{C_F}
\newcommand{\ssigma}{\boldsymbol{\sigma}}
\newcommand{\LL}{\boldsymbol{L}}
\DeclareMathOperator{\divergence}{div}
\newcommand{\uu}{\boldsymbol{u}}
\newcommand{\ww}{\boldsymbol{w}}
\newcommand{\vv}{\boldsymbol{v}}
\newcommand{\ttau}{{\boldsymbol\tau}}
\newcommand{\cchi}{{\boldsymbol\chi}}
\newcommand{\norm}[3][]{#1\|#2#1\|_{#3}}
\newcommand{\pwnabla}{\nabla_\TT}
\newcommand{\ip}[2]{(#1\hspace*{.5mm},#2)}
\newcommand{\dual}[2]{\langle#1\hspace*{.5mm},#2\rangle}
\newcommand{\TT}{\mathcal{T}}
\newcommand{\OO}{\mathcal{O}}
\newcommand{\NN}{\mathcal{N}}
\newcommand{\cS}{\mathcal{S}}
\newcommand{\PP}{\mathcal{P}}
\newcommand{\RT}{\mathcal{R}\!\mathcal{T}}
\newcommand{\Pidiv}{\Pi^{\divergence}}
\newcommand{\set}[2]{\big\{#1\,:\,#2\big\}}
\newcommand{\Copt}{C_\mathrm{opt}}
\newcommand{\Capp}{C_\mathrm{app}}
\newcommand{\Crel}{C_\mathrm{rel}}
\newcommand{\blfa}{a_\beta}
\newcommand{\blfb}{b_\beta}
\newcommand{\blfc}{c_\beta}
\DeclareMathOperator{\osc}{osc}
\newcommand{\oscf}{\osc}
\DeclareMathOperator{\est}{est}
\DeclareMathOperator{\err}{err}
\newcommand{\logLogSlopeTriangle}[6]
{
    % #1. Relative offset in x direction.
    % #2. Width in x direction, so xA-xB.
    % #3. Relative offset in y direction.
    % #4. Slope d(y)/d(log10(x)).
    % #5. Plot options.
    % #6. Name of slope

    \pgfplotsextra
    {
        \pgfkeysgetvalue{/pgfplots/xmin}{\xmin}
        \pgfkeysgetvalue{/pgfplots/xmax}{\xmax}
        \pgfkeysgetvalue{/pgfplots/ymin}{\ymin}
        \pgfkeysgetvalue{/pgfplots/ymax}{\ymax}

        % Calculate auxilliary quantities, in relative sense.
        \pgfmathsetmacro{\xArel}{#1}
        \pgfmathsetmacro{\yArel}{#3}
        \pgfmathsetmacro{\xBrel}{#1-#2}
        \pgfmathsetmacro{\yBrel}{\yArel}
        \pgfmathsetmacro{\xCrel}{\xArel}
        %\pgfmathsetmacro{\yCrel}{ln(\yC/exp(\ymin))/ln(exp(\ymax)/exp(\ymin))} % REPLACE THIS EXPRESSION WITH AN EXPRESSION INDEPENDENT OF \yC TO PREVENT THE 'DIMENSION TOO LARGE' ERROR.

        \pgfmathsetmacro{\lnxB}{\xmin*(1-(#1-#2))+\xmax*(#1-#2)} % in [xmin,xmax].
        \pgfmathsetmacro{\lnxA}{\xmin*(1-#1)+\xmax*#1} % in [xmin,xmax].
        \pgfmathsetmacro{\lnyA}{\ymin*(1-#3)+\ymax*#3} % in [ymin,ymax].
        \pgfmathsetmacro{\lnyC}{\lnyA+#4*(\lnxA-\lnxB)}
        \pgfmathsetmacro{\yCrel}{\lnyC-\ymin)/(\ymax-\ymin)} % THE IMPROVED EXPRESSION WITHOUT 'DIMENSION TOO LARGE' ERROR.

        % Define coordinates for \draw. MIND THE 'rel axis cs' as opposed to the 'axis cs'.
        \coordinate (A) at (rel axis cs:\xArel,\yArel);
        \coordinate (B) at (rel axis cs:\xBrel,\yCrel);
        \coordinate (C) at (rel axis cs:\xCrel,\yCrel);

        % Draw slope triangle.
        \draw[#5]   (A)--
                    (B)-- 
                    (C)-- node[pos=0.5,anchor=west] {#6}
                    cycle;
    }
}
\newcommand{\logLogSlopeTriangleBelow}[6]
{
    % #1. Relative offset in x direction.
    % #2. Width in x direction, so xA-xB.
    % #3. Relative offset in y direction.
    % #4. Slope d(y)/d(log10(x)).
    % #5. Plot options.
    % #6. Name of slope

    \pgfplotsextra
    {
        \pgfkeysgetvalue{/pgfplots/xmin}{\xmin}
        \pgfkeysgetvalue{/pgfplots/xmax}{\xmax}
        \pgfkeysgetvalue{/pgfplots/ymin}{\ymin}
        \pgfkeysgetvalue{/pgfplots/ymax}{\ymax}

        % Calculate auxilliary quantities, in relative sense.
        \pgfmathsetmacro{\xArel}{#1}
        \pgfmathsetmacro{\yArel}{#3}
        \pgfmathsetmacro{\xBrel}{#1-#2}
        \pgfmathsetmacro{\yBrel}{\yArel}
        \pgfmathsetmacro{\xCrel}{\xArel}
        %\pgfmathsetmacro{\yCrel}{ln(\yC/exp(\ymin))/ln(exp(\ymax)/exp(\ymin))} % REPLACE THIS EXPRESSION WITH AN EXPRESSION INDEPENDENT OF \yC TO PREVENT THE 'DIMENSION TOO LARGE' ERROR.

        \pgfmathsetmacro{\lnxB}{\xmin*(1-(#1-#2))+\xmax*(#1-#2)} % in [xmin,xmax].
        \pgfmathsetmacro{\lnxA}{\xmin*(1-#1)+\xmax*#1} % in [xmin,xmax].
        \pgfmathsetmacro{\lnyA}{\ymin*(1-#3)+\ymax*#3} % in [ymin,ymax].
        \pgfmathsetmacro{\lnyC}{\lnyA+#4*(\lnxA-\lnxB)}
        \pgfmathsetmacro{\yCrel}{\lnyC-\ymin)/(\ymax-\ymin)} % THE IMPROVED EXPRESSION WITHOUT 'DIMENSION TOO LARGE' ERROR.

        % Define coordinates for \draw. MIND THE 'rel axis cs' as opposed to the 'axis cs'.
        \coordinate (A) at (rel axis cs:\xArel,\yArel);
        \coordinate (B) at (rel axis cs:\xBrel,\yCrel);
        \coordinate (C) at (rel axis cs:\xBrel,\yArel);

        % Draw slope triangle.
        \draw[#5]   (A)--
                    (B)-- node[pos=0.5,anchor=east] {#6}
                    (C)-- 
                    cycle;
    }
}
\begin{document}

% Title and Authors
  \title[FOLS for the obstacle problem]{First-Order Least-Squares Method for the Obstacle problem}
\date{\today}

\author{Thomas F\"{u}hrer}
\address{Facultad de Matem\'{a}ticas, Pontificia Universidad Cat\'{o}lica de Chile, Santiago, Chile}
\email{tofuhrer@mat.uc.cl}

\thanks{{\bf Acknowledgment.} 
This work was supported by CONICYT through FONDECYT project ``Least-squares methods for obstacle problems'' under grant 11170050.}

\keywords{First-order system, least-squares method, variational inequality, obstacle problem, a priori analysis, a
posteriori analysis}
\subjclass[2010]{65N30, 65N12, 49J40}

\begin{abstract}
  We define and analyse a least-squares finite element method for a first-order reformulation of the obstacle problem.
  Moreover, we derive variational inequalities that are based on similar but non-symmetric bilinear forms.
  A priori error estimates including the case of non-conforming convex sets are given 
  and optimal convergence rates are shown for the lowest-order case.
  We provide also a posteriori bounds that can be be used as error indicators in an adaptive algorithm.
  Numerical studies are presented.
\end{abstract}

\maketitle

\section{Introduction}
Many physical problems are of obstacle type, or more generally, described by variational
inequalities~\cite{KinderlehrerStampacchia,Rodrigues_87_OPM}.
In this article we consider, as a model problem,
the classical obstacle problem where one seeks the equilibrium position of an elastic
membrane constrained to lie over an obstacle.

This type of problems is challenging, in particular for numerical methods, 
since solutions usually suffer from regularity issues and since the contact boundary is a priori unknown.
There exists already a long history of numerical methods, in particular finite element methods, see,
e.g., the books~\cite{Glowinski08,Glowinski81} for an overview on the topic.
However, the literature on least-squares methods for obstacle problems is scarce.
In fact, until the writing of this paper only~\cite{BurmanHLSLSQobstacle} was available for the classical obstacle problem 
where the idea goes back to a Nitsche-based method for contact problems introduced and analyzed in~\cite{ChoulyHildNitsche}.
An analysis of first-order least-squares finite element methods for Signorini problems can be found in~\cite{StarkeLSQSignoriniS}
and more recently~\cite{StarkeLSQSignorini}.
Let us also mention the pioneering work~\cite{Falk74} for the a priori analysis of a classical finite element scheme.
Newer articles include~\cite{GustafssonStenbergVideman_MixedFEMObstacle,MR3667082} where mixed and stabilized
methods are considered.

Least-squares finite element methods are a widespread class of numerical schemes and their basic idea is to approximate
the solution by minimizing a functional, e.g., the residual in some given norm.
Let us recall some important properties of least-squares finite element methods, a more 
complete list is given in the introduction of the overview article~\cite{BochevGunzbergerOverview}, see also the
book~\cite{BochevGunzburgerLSQ}.
\begin{itemize}
  \item \emph{Unconstrained stability:} One feature of least-squares schemes is that the methods are stable for all
    pairings of discrete spaces. 
  \item \emph{Adaptivity:} Another feature is that a posteriori bounds on the error are obtained by simply evaluating
    the least-squares functional.
    For instance, standard least-squares methods for the Poisson problem~\cite{BochevGunzburgerLSQ} are based on minimizing 
    residuals in $L^2$ norms, which can be localized and, then, be used as error indicators in an adaptive algorithm.
\end{itemize}
The main purpose of this paper is to close the gap in the literature and define least-squares based
methods for the obstacle problems. 
In particular, we want to study if the aforementioned properties transfer to the case of obstacle problems.
Let us shortly describe the functional our method is based on. For simplicity assume a zero obstacle (the remainder of
the paper deals with general non-zero obstacles). Then, the problem reads
\begin{align*}
  -\Delta u\geq f, \quad u\geq 0, \quad (-\Delta u-f)u = 0
\end{align*}
in some domain $\Omega$ and $u|_{\partial \Omega}=0$.
Introducing the Lagrange multiplier (or reaction force) $\lambda = -\Delta u-f$ and $\ssigma=\nabla u$, we rewrite the
problem as a first-order system, see also~\cite{BanzSchroeder,BanzStephan,BurmanHLSLSQobstacle,GustafssonStenbergVideman_MixedFEMObstacle},
\begin{align*}
  -\divergence\ssigma - \lambda = f, \quad \ssigma-\nabla u = 0, \quad u\geq 0, \quad \lambda\geq 0, 
  \quad \lambda u = 0.
\end{align*}
Note that $f\in L^2(\Omega)$ does not imply more regularity for $u$ so that $\lambda\in H^{-1}(\Omega)$ is only in the
dual space in general. However, observe that $\divergence\ssigma+\lambda=-f\in L^2(\Omega)$ and therefore the functional
\begin{align*}
  J( (u,\ssigma,\lambda); f) := \norm{\divergence\ssigma+\lambda+f}{}^2 + \norm{\nabla u-\ssigma}{}^2 +
  \dual{\lambda}u,
\end{align*}
where $\dual\cdot\cdot$ denotes a duality pairing, is well-defined for $\divergence\ssigma+\lambda\in L^2(\Omega)$.
We will show that minimizing $J$ over a convex set with the additional
linear constraints $u\geq0$, $\lambda\geq 0$ is equivalent to solving the obstacle problem.
We will consider the variational inequality associated to this problem with corresponding bilinear form
$a(\cdot,\cdot)$. An issue that arises is that $a(\cdot,\cdot)$ is not necessarily coercive.
However, as it turns out, a simple scaling of the first term in the functional ensures coercivity on the whole space.
In view of the aforementioned properties, this means that our method is \emph{unconstrained stable}.
The recent work~\cite{GustafssonStenbergVideman_MixedFEMObstacle} based on a Lagrange formulation (without reformulation
to a first-order system) considers augmenting the trial spaces with bubble functions (mixed method) resp. adding residual
terms (stabilized method) to obtain stability.

Furthermore, we will see that the functional $J$ evaluated at some discrete approximation 
$(u_h,\ssigma_h,\lambda_h)$ with $u_h,\lambda_h\geq 0$
is an upper bound for the error. Note that for $\lambda_h\in L^2(\Omega)$ the duality
$\dual{\lambda_h}{u_h}$ reduces to the $L^2$ inner product. Thus, all the terms in the functional can be localized and
used as error indicators.

Additionally, we will derive and analyse other variational inequalities that are also based on the first-order
reformulation.
The resulting methods are quite similar to the least-squares scheme since they share the same residual terms.
The only difference is that the compatibility condition $\lambda u = 0$ is incorporated in a different, non-symmetric,
way.
We will present a uniform analysis that covers the least-squares formulation and the novel variational inequalities
of the obstacle problem.

Finally, we point out that the use of adaptive schemes for obstacle problems is quite natural.
First, the solutions may suffer from singularities stemming from the geometry, and
second, the free boundary is a priori unknown.
There exists plenty of literature on a posteriori estimators resp. adaptivity for finite elements methods for the obstacle
problem, see, 
e.g.~\cite{Braess05,CarstensenBartels04,ChenNochetto00,NochettoSiebertVeeser05,NochettoSV03,Veeser01I,WeissWohlmuth10} 
to name a few.
Many of the estimators are based on the use of a discrete Lagrange multiplier which is obtained in a postprocessing step.
In contrast, our proposed methods simultaneously approximate the Lagrange multiplier. 
This allows for a simple analysis of reliable a posteriori bounds.

\subsection{Outline}
The remainder of the paper is organized as follows. 
In~\cref{sec:main} we describe the model problem, introduce the corresponding first-order system and based on that
reformulation define our least-squares method.
Then,~\cref{sec:VI} deals with the definition and analysis of different variational inequalities.
In~\cref{sec:apost} we provide an a posteriori analysis and numerical studies are presented in~\cref{sec:examples}.
Some concluding remarks are given in~\cref{sec:conclusions}.

\section{Least-squares method}\label{sec:main}
In~\crefrange{sec:model}{sec:notation} we describe the model problem and introduce the reader to our notation.
Then,~\cref{sec:lsq} is devoted to the definition and analysis of a least-squares functional.

\subsection{Model problem}\label{sec:model}
Let $\Omega\subset \R^n$, $n=2,3$ denote a polygonal Lipschitz domain with boundary $\Gamma=\partial\Omega$.
For given $f\in L^2(\Omega)$ and $g\in H^1(\Omega)$ with $g|_{\Gamma}\leq 0$ we consider the classical obstacle problem:
Find a solution $u$ to
\begin{subequations}\label{eq:model}
\begin{alignat}{2}
  -\Delta u &\geq f &\quad&\text{in }\Omega, \\
  u&\geq g &\quad&\text{in }\Omega, \\
  (u-g)(-\Delta u-f) &= 0 &\quad&\text{in }\Omega, \\
  u &= 0 &\quad&\text{on }\Gamma.
\end{alignat}
\end{subequations}
It is well-known that this problem admits a unique solution $u\in H_0^1(\Omega)$, and it
can be equivalently characterized by the variational inequality: 
Find $u\in H_0^1(\Omega)$, $u\geq g$ such that
\begin{align}\label{eq:model:VI}
  \int_{\Omega} \nabla u \cdot\nabla(v-u)\, dx \geq \int_\Omega f(v-u) \,dx
  \quad\text{for all } v\in H_0^1(\Omega), v\geq g,
\end{align}
see~\cite{KinderlehrerStampacchia}. 
For a more detailed description of the involved function spaces we refer to~\cref{sec:notation} below.

\subsection{Notation \& function spaces}\label{sec:notation}
We use the common notation for Sobolev spaces $H_0^1(\Omega)$, $H^s(\Omega)$ ($s>0$).
Let $\ip\cdot\cdot$ denote the $L^2(\Omega)$ inner product, which induces the norm $\norm{\cdot}{}$.
The dual of $H_0^1(\Omega)$ is denoted by $H^{-1}(\Omega) := (H_0^1(\Omega))^*$, where duality 
$\dual\cdot\cdot$ is understood with respect to the extended $L^2(\Omega)$ inner product.
We equip $H^{-1}(\Omega)$ with the dual norm
\begin{align*}
  \norm{\lambda}{-1} := \sup_{0\neq v\in H_0^1(\Omega)} \frac{\dual{\lambda}v}{\norm{\nabla v}{}}.
\end{align*}

Recall Friedrichs' inequality
\begin{align*}
  \norm{u}{} \leq \CF \norm{\nabla v}{} \quad\text{for }v\in H_0^1(\Omega),
\end{align*}
where $0<\CF=\CF(\Omega)\leq \diam(\Omega)$. Thus, by definition we have $\norm{\lambda}{-1}\leq \CF \norm{\lambda}{}$ for
$\lambda\in L^2(\Omega)$.

Let $\divergence : \LL^2(\Omega):=L^2(\Omega)^n \to H^{-1}(\Omega)$ denote the generalized divergence operator, i.e.,
$\dual{\divergence\ssigma}{u} := -\ip{\ssigma}{\nabla u}$ for all $\ssigma\in \LL^2(\Omega)$, $u\in H_0^1(\Omega)$.
This operator is bounded,
\begin{align*}
  \norm{\divergence\ssigma}{-1} = \sup_{0\neq v\in H_0^1(\Omega)} 
  \frac{\dual{\divergence\ssigma}v}{\norm{\nabla v}{}}
  = \sup_{0\neq v\in H_0^1(\Omega)} \frac{-\ip{\ssigma}{\nabla v}}{\norm{\nabla v}{}} \leq
  \norm{\ssigma}{}.
\end{align*}

Let $v\in H^1(\Omega)$. We say $v\geq 0$ if $v\geq 0$ a.e.~in $\Omega$. Moreover, $\lambda\geq0$ for $\lambda\in
H^{-1}(\Omega)$ means that $\dual{\lambda}v\geq 0$ for all $v\in H_0^1(\Omega)$ with $v\geq 0$.

Define the space
\begin{align*}
  V := H_0^1(\Omega)\times \LL^2(\Omega) \times H^{-1}(\Omega)
\end{align*}
with norm
\begin{align*}
  \norm{\vv}V^2 := \norm{\nabla v}{}^2 + \norm{\ttau}{}^2 + \norm{\mu}{-1}^2 
  \quad\text{for } \vv = (v,\ttau,\mu) \in V
\end{align*}
and the space
\begin{align*}
  U := \set{(u,\ssigma,\lambda)\in V}{\divergence\ssigma+\lambda\in L^2(\Omega)}
\end{align*}
with norm
\begin{align*}
  \norm{\uu}U^2 := \norm{\nabla u}{}^2 + \norm{\ssigma}{}^2 + \norm{\divergence\ssigma+\lambda}{}^2
  \quad\text{for }\uu = (u,\ssigma,\lambda)\in U.
\end{align*}
Observe that $\norm{\cdot}U$ is a stronger norm than $\norm{\cdot}V$, i.e., 
\begin{align*}
  \norm{\nabla u}{}^2 + \norm{\ssigma}{}^2 + \norm{\lambda}{-1}^2 
  &\leq \norm{\nabla u}{}^2 + \norm{\ssigma}{}^2 + 2\norm{\divergence\ssigma+\lambda}{-1}^2
  +2\norm{\divergence\ssigma}{-1}^2  \\
  &\leq \norm{\nabla u}{}^2 + 3\norm{\ssigma}{}^2 +
  2\CF^2\norm{\divergence\ssigma+\lambda}{}^2.
\end{align*}

Our first least-squares formulation will be based on the minimization over the non-empty, convex and closed subset 
\begin{align*}
  K^{s} := \set{(u,\ssigma,\lambda)\in U}{ u-g\geq 0,\, \lambda\geq 0},
\end{align*}
where $g$ is the given obstacle function.
We will also derive and analyse variational inequalities based on non-symmetric bilinear forms that utilize the sets
\begin{align*}
  K^{0} &:= \set{(u,\ssigma,\lambda)\in U}{ u-g\geq 0}, \\
  K^{1} &:= \set{(u,\ssigma,\lambda)\in U}{ \lambda\geq 0}.
\end{align*}
Clearly, $K^s\subset K^{j}$ for $j=1,2$.

We write $A\lesssim B$ if there exists a constant $C>0$, independent of quantities of interest, such that
$A\leq C B$. 
Analogously we define $A\gtrsim B$.
If $A\lesssim B$ and $B\lesssim A$ holds then we write $A\simeq B$.

\subsection{Least-squares functional}\label{sec:lsq}
Let $u\in H_0^1(\Omega)$ denote the unique solution of the obstacle problem~\eqref{eq:model}.
Define $\lambda := -\Delta u - f\in H^{-1}(\Omega)$ and $\ssigma:=\nabla u$. 
Problem~\eqref{eq:model} can equivalently be written as the first-order problem
\begin{subequations}\label{eq:model:fo}
\begin{alignat}{2}
  -\divergence\ssigma-\lambda &= f &\quad&\text{in }\Omega, \\
  \ssigma-\nabla u &= 0 &\quad&\text{in }\Omega, \\
  u&\geq g &\quad&\text{in }\Omega, \\
  \lambda&\geq 0 &\quad&\text{in }\Omega, \\
  (u-g)\lambda &= 0 &\quad&\text{in }\Omega, \\
  u &= 0 &\quad&\text{on }\Gamma.
\end{alignat}
\end{subequations}
Observe that $\divergence\ssigma+\lambda\in L^2(\Omega)$ and that the unique solution $\uu =(u,\ssigma,\lambda)\in U$
satisfies $\uu\in K^s$.
We consider the functional
\begin{align*}
  J(\uu;f,g) := \norm{\divergence\ssigma+\lambda+f}{}^2 + \norm{\nabla u-\ssigma}{}^2 + \dual{\lambda}{u-g}
\end{align*}
for $\uu=(u,\ssigma,\lambda)\in U$, $f\in L^2(\Omega)$, $g\in H_0^1(\Omega)$ 
and the minimization problem: Find $\uu\in K^s$ with
\begin{align}\label{eq:lsq}
  J(\uu;f,g) = \min_{\vv\in K^s} J(\vv;f,g).
\end{align}
Note that the definition of the functional only makes sense if $g\in H_0^1(\Omega)$.

\begin{theorem}\label{thm:lsq}
  If $f\in L^2(\Omega)$, $g\in H_0^1(\Omega)$, then problems~\eqref{eq:model:fo} and~\eqref{eq:lsq} are equivalent.
  In particular, there exists a unique solution $\uu\in K^s$ of~\eqref{eq:lsq} and it holds that
  \begin{align}\label{eq:lsq:error}
    J(\vv;f,g) \geq C_J \norm{\vv-\uu}U^2 \quad\text{for all }\vv\in K^s.
  \end{align}
  The constant $C_J>0$ depends only on $\Omega$.
\end{theorem}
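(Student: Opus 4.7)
I would split the theorem into three pieces: (i) construct an obvious candidate minimizer of $J$ from the classical solution $u$ of \eqref{eq:model:VI}, (ii) show $J\geq 0$ on $K^s$ (so the candidate is in fact a minimizer), and (iii) prove the quadratic error bound \eqref{eq:lsq:error}, from which both uniqueness and the equivalence with \eqref{eq:model:fo} are automatic. Step (iii) is the main obstacle; the difficulty is that $J$ has no direct $\norm{\nabla e_v}{}^2$ contribution, so control on $\nabla(v-u)$ must be recovered from the other ingredients of $J$ together with the cone conditions defining $K^s$.

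\textbf{Steps (i)--(ii).} Given $u\in H_0^1(\Omega)$ solving \eqref{eq:model:VI}, I would set $\ssigma:=\nabla u$ and $\lambda:=-\Delta u-f\in H^{-1}(\Omega)$; then $\divergence\ssigma+\lambda=-f\in L^2(\Omega)$, so $\uu=(u,\ssigma,\lambda)\in U$. Nonnegativity $\lambda\geq 0$ follows by testing \eqref{eq:model:VI} with $v=u+w$, $w\in H_0^1(\Omega)$, $w\geq 0$. The complementarity identity $\dual{\lambda}{u-g}=0$ follows by testing \eqref{eq:model:VI} with $v=u\pm t(u-g)$ for small $t>0$ (admissible because $u-g\geq 0$ and $g\in H_0^1(\Omega)$), which sandwiches $\int_\Omega\nabla u\cdot\nabla(u-g)\,dx$ between $\int_\Omega f(u-g)\,dx$ on both sides. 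Hence $\uu\in K^s$ and every summand of $J(\uu;f,g)$ vanishes. For (ii), the first two summands are trivially nonnegative, and $\dual{\mu}{v-g}\geq 0$ by the very definition of $\mu\geq 0$ in $H^{-1}(\Omega)$ applied to $v-g\in H_0^1(\Omega)$ with $v-g\geq 0$.

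\textbf{Step (iii).} Let $\vv=(v,\ttau,\mu)\in K^s$ and set $e_v:=v-u$, $e_\ttau:=\ttau-\ssigma$, $e_\mu:=\mu-\lambda$ and $R:=\divergence\ttau+\mu+f=\divergence e_\ttau+e_\mu\in L^2(\Omega)$. Then
\[
  J(\vv;f,g)=\norm{R}{}^2+\norm{\nabla e_v-e_\ttau}{}^2+\dual{\mu}{v-g}.
\]
Two identities drive the rest. The \emph{compatibility identity}
\[
  \dual{\mu}{v-g}-\dual{e_\mu}{e_v}=\dual{\mu}{u-g}+\dual{\lambda}{v-g},
\]
obtained from $\mu=\lambda+e_\mu$, $v-g=(u-g)+e_v$ and $\dual{\lambda}{u-g}=0$, has nonnegative right-hand side on $K^s$; hence $\dual{\mu}{v-g}\geq\dual{e_\mu}{e_v}$. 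The \emph{energy identity}, obtained by integration by parts (using $e_v\in H_0^1(\Omega)$ and $\divergence e_\ttau=R-e_\mu$),
\[
  \norm{\nabla e_v}{}^2=\ip{\nabla e_v}{\nabla e_v-e_\ttau}-\ip{R}{e_v}+\dual{e_\mu}{e_v},
\]
combined with the compatibility identity gives
\[
  \norm{\nabla e_v}{}^2\leq\ip{\nabla e_v}{\nabla e_v-e_\ttau}-\ip{R}{e_v}+\dual{\mu}{v-g}.
\]
I would then apply Young's inequality to absorb $\tfrac12\norm{\nabla e_v}{}^2$ on the left, using Friedrichs' inequality $\norm{e_v}{}\leq\CF\norm{\nabla e_v}{}$ to handle the pairing $\ip{R}{e_v}$. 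This yields $\norm{\nabla e_v}{}^2\lesssim J(\vv;f,g)$ with a constant depending only on $\Omega$ (through $\CF$). The triangle inequality then gives $\norm{e_\ttau}{}^2\leq 2\norm{\nabla e_v}{}^2+2\norm{\nabla e_v-e_\ttau}{}^2\lesssim J(\vv;f,g)$, and $\norm{R}{}^2\leq J(\vv;f,g)$ is immediate, which together prove \eqref{eq:lsq:error}. Uniqueness of the minimizer follows since any two minimizers must both realize $J=0$ and hence coincide with $\uu$.
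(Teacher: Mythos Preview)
Your proof is correct and follows essentially the same route as the paper. Both arguments hinge on the same compatibility inequality $\dual{\mu}{v-g}\geq\dual{e_\mu}{e_v}$ (obtained from $\dual{\lambda}{u-g}=0$, $\dual{\lambda}{v-g}\geq 0$, $\dual{\mu}{u-g}\geq 0$), followed by an integration-by-parts step that trades $\dual{e_\mu}{e_v}$ for a term $\ip{R}{e_v}$ that is controlled via Friedrichs and Young; the paper organizes the algebra around the expansion of $\norm{\nabla e_v-e_\ttau}{}^2$ and an auxiliary scaling by $(1+\CF^2)$, while you target $\norm{\nabla e_v}{}^2$ directly through the energy identity, but the ingredients and constants are the same.
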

\begin{proof}
  Let $\uu := (u,\ssigma,\lambda) =(u,\nabla u,-\Delta u - f)\in K^s$ denote the unique solution of~\eqref{eq:model:fo}.
  Observe that $J(\vv;f,g)\geq 0$ for all $\vv\in K^s$ and $J(\uu;f,g)=0$, thus, $\uu$ minimizes the functional.
  Suppose~\eqref{eq:lsq:error} holds and that $\uu^*\in K^s$ is another minimizer. Then,~\eqref{eq:lsq:error} proves
  that $\uu=\uu^*$. 
  It only remains to show~\eqref{eq:lsq:error}.
  Let $\vv=(v,\ttau,\mu)\in K^s$. Since $f=-\divergence\ssigma-\lambda$ and $\nabla u-\ssigma = 0$ we have with the
  constant $\CF>0$ that
  \begin{align*}
    J(\vv;f,g) &= \norm{\divergence(\ttau-\ssigma)+(\mu-\lambda)}{}^2 + \norm{\nabla(v-u)-(\ttau-\ssigma)}{}^2
    + \dual{\mu}{v-g} \\
    &\simeq (1+\CF^2) \norm{\divergence(\ttau-\ssigma)+(\mu-\lambda)}{}^2 + \norm{\nabla(v-u)-(\ttau-\ssigma)}{}^2
    + \dual{\mu}{v-g}.
  \end{align*}
  Moreover, $\dual{\lambda}{u-g}=0$ and $\dual{\lambda}{v-g}\geq 0$, $\dual{\mu}{u-g}\geq 0$. 
  Therefore,
  \begin{align*}
    \dual{\mu}{v-g} &= \dual{\mu}{v-u} + \dual{\mu}{u-g} + \dual{\lambda}{u-g} \\
    &\geq \dual{\mu}{v-u} + \dual{\lambda}{u-g} + \dual{\lambda}{g-v} 
    \\
    &= \dual{\mu}{v-u} + \dual{\lambda}{u-v} = \dual{\mu-\lambda}{v-u}.
  \end{align*}
  Define $\ww := (w,\cchi,\nu) := \vv-\uu$. Then, the Cauchy-Schwarz inequality,
  Young's inequality and the definition of the divergence operator
  yield
  \begin{align*}
    J(\vv;f,g) &\simeq (1+\CF^2) \norm{\divergence(\ttau-\ssigma)+(\mu-\lambda)}{}^2 + \norm{\nabla(v-u)-(\ttau-\ssigma)}{}^2
    + \dual{\mu}{v-g} \\
    &\geq (1+\CF^2) \norm{\divergence\cchi+\nu}{}^2 + \norm{\nabla w-\cchi}{}^2
    + \dual{\nu}{w} \\
    &= (1+\CF^2) \norm{\divergence\cchi+\nu}{}^2 + \norm{\nabla w}{}^2 + \norm{\cchi}{}^2 
    -\ip{\nabla w}{\cchi} + \dual{\divergence\cchi}{w} + \dual{\nu}{w} \\
    &\geq (1+\CF^2) \norm{\divergence\cchi+\nu}{}^2 + \tfrac12\norm{\nabla w}{}^2 + \tfrac12\norm{\cchi}{}^2
    + \dual{\divergence\cchi+\nu}w.
  \end{align*}
  Application of the Cauchy-Schwarz inequality, Friedrichs'
  inequality and Young's inequality gives us for the last term and $\delta>0$
  \begin{align*}
    |\dual{\divergence\cchi+\nu}w|\leq \CF\norm{\divergence\cchi+\nu}{}\norm{\nabla w}{} \leq \CF^2 \frac{\delta^{-1}}2 
    \norm{\divergence\cchi+\nu}{}^2 + \frac{\delta}2 \norm{\nabla w}{}^2.
  \end{align*}
  Putting altogether and choosing $\delta=\tfrac12$ we end up with
  \begin{align*}
    J(\vv;f,g) &\simeq (1+\CF^2) \norm{\divergence\cchi+\nu}{}^2 + \norm{\nabla w-\cchi}{}^2
    + \dual{\mu}{v-g}
    \\
    &\geq (1+\CF^2) \norm{\divergence\cchi+\nu}{}^2 + \norm{\nabla w-\cchi}{}^2
    + \dual{\nu}{w} \\
    &\geq \norm{\divergence\cchi+\nu}{}^2 + \tfrac14\norm{\nabla w}{}^2 + \tfrac12\norm{\cchi}{}^2
    \simeq \norm{\ww}U^2 = \norm{\vv-\uu}U^2,
  \end{align*}
  which finishes the proof.
\end{proof}

\begin{remark}
  Note that~\eqref{eq:lsq:error} measures the error of any function $\vv\in K^s$, in particular, it can be used as a
  posteriori error estimator when $\vv\in K_h^s\subset K^s$ is a discrete approximation.
  However, in practice the condition $K_h^s \subset K^s$ is hard to realize in most cases. Below we introduce a simple
  scaling of the first term in the least-squares functional that allows us to prove coercivity of the associated
  bilinear form on the whole space $U$.
\end{remark}

For given $f\in L^2(\Omega)$, $g\in H_0^1(\Omega)$, and fixed parameter $\beta>0$
define the bilinear form $\blfa : U\times U \to \R$ and functional $F_\beta : U\to \R$ by
\begin{align}
  \blfa(\uu,\vv)&:= \beta\ip{\divergence\ssigma+\lambda}{\divergence\ttau+\mu} + \ip{\nabla u-\ssigma}{\nabla v-\ttau}
  + \tfrac12(\dual{\mu}u + \dual{\lambda}v), \\
  F_\beta(\vv) &:= -\beta\ip{f}{\divergence\ttau+\mu} + \tfrac12\dual{\mu}g
\end{align}
for all $\uu=(u,\ssigma,\lambda), \vv = (v,\ttau,\mu)\in U$.
We stress that $a_1(\cdot,\cdot)$ and $F_1(\cdot)$ induce the functional $J(\cdot;\cdot)$, i.e.,
\begin{align*}
  J(\uu;f,g) = a_1(\uu,\uu)-2F_1(\uu)+\ip{f}f.
\end{align*}
Since $J$ is differentiable it is well-known that the solution 
$\uu\in K^s$ of~\eqref{eq:lsq} satisfies the variational inequality
\begin{align}\label{eq:lsq:VI}
  a_1(\uu,\vv-\uu) &\geq F_1(\vv-\uu) \quad\text{for all } \vv\in K^s.
\end{align}
Conversely, if $J$ is also convex in $K^s$, then
any solution of~\eqref{eq:lsq:VI} solves~\eqref{eq:lsq}.
However, $J$ is convex on $K^s$ iff $a_1(\vv-\ww,\vv-\ww)\geq0$ for all $\vv,\ww\in K^s$, which is not true in general.
In~\cref{sec:VI} below we will show that for sufficiently large
$\beta >1$ the bilinear form $\blfa(\cdot,\cdot)$ is coercive, even on the whole space $U$.
This has the advantage that we can prove unique solvability of the continuous problem and its discretization
simultaneously.
More important, in practice this allows the use of non-conforming subsets $K_h^s\nsubseteq K_h$.

\section{Variational inequalities}\label{sec:VI}
In this section we introduce and analyse different variational inequalities.
The idea of including the compatibility condition in different ways has also been used in~\cite{DPGsignorini} to derive
DPG methods for contact problems.

We define the bilinear forms $\blfb,\blfc : U\times U\to \R$ and functionals $G_\beta$, $H_\beta$ by
\begin{align*}
  \blfb(\uu,\vv)&:= \beta\ip{\divergence\ssigma+\lambda}{\divergence\ttau+\mu} + \ip{\nabla u-\ssigma}{\nabla v-\ttau}
  + \dual{\lambda}v, \\
  \blfc(\uu,\vv)&:= \beta\ip{\divergence\ssigma+\lambda}{\divergence\ttau+\mu} + \ip{\nabla u-\ssigma}{\nabla v-\ttau}
  + \dual{\mu}u, \\
  G_\beta(\vv) &:= -\beta\ip{f}{\divergence\ttau+\mu} \\
  H_\beta(\vv) &:= -\beta\ip{f}{\divergence\ttau+\mu} + \dual{\mu}g.
\end{align*}

Let $\uu = (u,\ssigma,\lambda)\in K^s\subset K^j$ ($j=0,1$) denote the unique solution of~\eqref{eq:model:fo} with $f\in
L^2(\Omega)$, $g\in H_0^1(\Omega)$.
Recall that $\divergence\ssigma+\lambda = -f$. Testing this identity with $\divergence\ttau+\mu$, multiplying with
$(\beta-1)$ and adding it to~\eqref{eq:lsq:VI} we see that the solution $\uu\in K^s$ satisfies the variational
inequality
\begin{align}\tag{VIa}\label{eq:VI:a}
  \blfa(\uu,\vv-\uu) \geq F_\beta(\vv-\uu) \quad\text{for all }\vv\in K^s.
\end{align}

For the derivation of our second variational inequality 
let $\uu=(u,\ssigma,\lambda)\in K^{0}$ denote the unique solution of~\eqref{eq:model:fo} with $f\in L^2(\Omega)$, $g\in
H^1(\Omega)$, $g|_\Gamma \leq 0$.
Recall that $\lambda = -\Delta u-f$. By~\eqref{eq:model:VI} we have that 
\begin{align*}
  \dual{\lambda}{v-u} = \ip{\nabla u}{\nabla(v-u)} - \ip{f}{v-u} \geq 0 
\end{align*}
for all $v\in H_0^1(\Omega)$, $v\geq g$.
Thus, $\uu\in K^0$ satisfies the variational inequality
\begin{align}\tag{VIb}\label{eq:VI:b}
  \blfb(\uu,\vv-\uu) \geq G_\beta(\vv-\uu) \quad\text{for all }\vv\in K^0.
\end{align}

Our final method is based on the observation that for $\mu\geq 0$, we have that $\dual{\mu}{u-g}\geq 0$  for $u\geq g\in
H_0^1(\Omega)$. Together with the compatibility $\dual{\lambda}{u-g}=0$ we conclude $\dual{\mu-\lambda}{u-g}\geq 0$.
Thus, $\uu\in K^1$ satisfies the variational inequality
\begin{align}\tag{VIc}\label{eq:VI:c}
  \blfc(\uu,\vv-\uu) \geq H_\beta(\vv-\uu) \quad\text{for all }\vv\in K^1.
\end{align}

Note that $\blfa$ is symmetric, whereas $\blfb$, $\blfc$ are not.

\subsection{Solvability}
In what follows we analyse the (unique) solvability of the variational inequalities~\eqref{eq:VI:a}--\eqref{eq:VI:c}
in a uniform manner (including discretizations).
\begin{lemma}\label{lem:blf}
  Suppose $\beta>0$.
  Let $A\in \{\blfa,\blfb,\blfc\}$. There exists $C_\beta>0$ depending only on $\beta>0$ and $\Omega$
  such that
  \begin{align*}
    |A(\uu,\vv)| \leq C_\beta \norm{\uu}U\norm{\vv}U \quad\text{for all }\uu,\vv\in U.
  \end{align*}

  If $\beta\geq 1+\CF^2$, then $A$ is coercive, i.e.,
  \begin{align*}
    C \norm{\uu}U^2 \leq A(\uu,\uu) \quad\text{for all }\uu\in U.
  \end{align*}
  The constant $C>0$ is independent of $\beta$ and $\Omega$.
\end{lemma}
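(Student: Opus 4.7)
The statement has two parts, and the boundedness part is routine: for any of the three forms, Cauchy--Schwarz handles the $L^2$ terms $\beta(\divergence\ssigma+\lambda,\divergence\ttau+\mu)$ and $(\nabla u-\ssigma,\nabla v-\ttau)$, with an obvious bound by $\beta\norm{\uu}U\norm{\vv}U$. The only subtle terms are the duality pairings. For example, $|\dual{\mu}{u}|\leq \norm{\mu}{-1}\norm{\nabla u}{}$, and
$\norm{\mu}{-1}\leq \norm{\divergence\ttau+\mu}{-1}+\norm{\divergence\ttau}{-1}\leq \CF\norm{\divergence\ttau+\mu}{}+\norm{\ttau}{}\leq (1+\CF)\norm{\vv}U$,
which handles $\dual{\mu}u$, and symmetrically $\dual{\lambda}v$. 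This gives the desired constant $C_\beta$ depending only on $\beta$ and $\Omega$ (through $\CF$).

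The coercivity proof hinges on one convenient observation: setting $\vv=\uu$ collapses the three forms onto a single expression, since $\tfrac12(\dual{\lambda}u+\dual{\lambda}u)=\dual{\lambda}u=\dual{\lambda}u$. Hence it suffices to estimate
$A(\uu,\uu)=\beta\norm{\divergence\ssigma+\lambda}{}^2+\norm{\nabla u-\ssigma}{}^2+\dual{\lambda}u$
from below by $C\norm{\uu}U^2$ for $\beta\geq 1+\CF^2$. The plan is to repeat the key identity already used in the proof of \cref{thm:lsq}: expand $\norm{\nabla u-\ssigma}{}^2=\norm{\nabla u}{}^2-2\ip{\nabla u}{\ssigma}+\norm{\ssigma}{}^2$ and rewrite $\dual{\lambda}u=\dual{\divergence\ssigma+\lambda}u-\dual{\divergence\ssigma}u=\dual{\divergence\ssigma+\lambda}u+\ip{\ssigma}{\nabla u}$. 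This yields
\[
A(\uu,\uu)=\beta\norm{\divergence\ssigma+\lambda}{}^2+\norm{\nabla u}{}^2+\norm{\ssigma}{}^2-\ip{\nabla u}{\ssigma}+\dual{\divergence\ssigma+\lambda}{u}.
\]

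Now I apply Young's inequality twice. The cross term $\ip{\nabla u}{\ssigma}$ is absorbed by $\tfrac12\norm{\nabla u}{}^2+\tfrac12\norm{\ssigma}{}^2$, leaving $\tfrac12\norm{\nabla u}{}^2+\tfrac12\norm{\ssigma}{}^2$ on the positive side. For the duality term I use Friedrichs together with Young at parameter $\delta=\tfrac12$ (exactly the choice in the proof of \cref{thm:lsq}):
$|\dual{\divergence\ssigma+\lambda}u|\leq \CF\norm{\divergence\ssigma+\lambda}{}\norm{\nabla u}{}\leq \CF^2\norm{\divergence\ssigma+\lambda}{}^2+\tfrac14\norm{\nabla u}{}^2$.
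Collecting terms,
\[
A(\uu,\uu)\geq (\beta-\CF^2)\norm{\divergence\ssigma+\lambda}{}^2+\tfrac14\norm{\nabla u}{}^2+\tfrac12\norm{\ssigma}{}^2.
\]
The assumption $\beta\geq 1+\CF^2$ makes the coefficient of the first term at least $1$, and the right-hand side is then bounded below by a $\beta$- and $\Omega$-independent multiple of $\norm{\uu}U^2$.

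\textbf{Main obstacle.} There is no real obstacle; everything is a careful application of Cauchy--Schwarz, Young, and Friedrichs. The only delicate point is the bookkeeping that justifies the sharp threshold $\beta\geq 1+\CF^2$: the $\CF^2$ is forced by the duality term $\dual{\divergence\ssigma+\lambda}u$, and the extra $+1$ is what remains of $\beta\norm{\divergence\ssigma+\lambda}{}^2$ after the absorption. The fact that the resulting constant $C$ is independent of both $\beta$ and $\Omega$ then follows because $\tfrac14$ and $\tfrac12$ are absolute, and the coefficient in front of $\norm{\divergence\ssigma+\lambda}{}^2$ is $\geq 1$.
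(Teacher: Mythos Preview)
Your proposal is correct and follows essentially the same approach as the paper: for boundedness you split the duality term via $\dual{\mu}u=\dual{\divergence\ttau+\mu}u-\dual{\divergence\ttau}u$ and bound each piece with Friedrichs and the $\norm{\cdot}{U}$ norm, exactly as the paper does; for coercivity you observe that the three forms agree on the diagonal and then reproduce verbatim the key computation from the proof of \cref{thm:lsq}, which is precisely what the paper invokes. The constants and the threshold $\beta\geq 1+\CF^2$ match.
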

\begin{proof}
  We prove boundedness of $A = \blfa$. Let $\uu=(u,\ssigma,\lambda),\vv=(v,\ttau,\mu)\in U$ be given.
  The Cauchy-Schwarz inequality together with the Friedrichs' inequality and boundedness of the divergence operator
  yields
  \begin{align*}
    |\blfa(\uu,\vv)| &\leq \beta \norm{\divergence\ssigma+\lambda}{}\norm{\divergence\ttau+\mu}{}
    +\norm{\nabla u-\ssigma}{}\norm{\nabla v-\ttau}{} \\
    &\quad+ \tfrac12(\dual{\divergence\ttau+\mu}{u} -\dual{\divergence\ttau}u +
    \dual{\divergence\ssigma+\lambda}v-\dual{\divergence\ssigma}v) \\
    &\leq \beta \norm{\divergence\ssigma+\lambda}{}\norm{\divergence\ttau+\mu}{}
    +\norm{\nabla u-\ssigma}{}\norm{\nabla v-\ttau}{} \\
    &\quad+ \tfrac12\big( (\CF\norm{\divergence\ttau+\mu}{}+\norm{\ttau}{})\norm{\nabla u}{} +
    (\CF\norm{\divergence\ssigma+\lambda}{}+\norm{\ssigma}{})\norm{\nabla v}{}\big).
  \end{align*}
  This shows boundedness of $\blfa(\cdot,\cdot)$. Similarly, one concludes boundedness of $\blfb(\cdot,\cdot)$ and
  $\blfc(\cdot,\cdot)$.
  
  For the proof of coercivity, observe that $\blfa(\ww,\ww) = \blfb(\ww,\ww) = \blfc(\ww,\ww)$ for all $\ww\in U$.
  We stress that coercivity directly follows from the arguments given in the proof of~\cref{thm:lsq}.
  Note that the choice of $\beta$ yields
  \begin{align*}
    A(\ww,\ww) &\geq (1+\CF^2) \norm{\divergence\cchi+\nu}{}^2 + \norm{\nabla w-\cchi}{}^2
    + \dual{\nu}{w} 
  \end{align*}
  for $\ww=(w,\cchi,\nu)\in U$.
  The right-hand side can be further estimated following the argumentation as in the proof of~\cref{thm:lsq} which gives us
  \begin{align*}
    (1+\CF^2) \norm{\divergence\cchi+\nu}{}^2 + \norm{\nabla w-\cchi}{}^2 + \dual{\nu}w \gtrsim \norm{\ww}U^2.
  \end{align*}
  This finishes the proof.
\end{proof}

\begin{remark}\label{rem:scale}
  Recall that $\CF\leq \diam(\Omega)$. Therefore, we can always choose $\beta=1+\diam(\Omega)^2$ to ensure coercivity of
  our bilinear forms.
  Note that a scaling of $\Omega$ such that $\diam(\Omega)\leq 1$ implies that we can choose $\beta=2$. 
  Furthermore, observe that a scaling of $\Omega$ transforms~\eqref{eq:model} to an equivalent obstacle problem (with
  appropriate redefined functions $f,g$).
  To be more precise, define $\widetilde u(x) := u(dx)$ with $d:=\diam(\Omega)>0$ and $u\in H_0^1(\Omega)$ the
  solution of~\eqref{eq:model}. Moreover, set $\widetilde f(x) = d^2 f(dx)$, $\widetilde g(x) := g(dx)$.
  Then, $\widetilde u$ solves~\eqref{eq:model} in $\widetilde\Omega := \set{x/d}{x\in\Omega}$ with $f,g$ replaced by $\widetilde
  f,\widetilde g$.
\end{remark}

The variational inequalities~\eqref{eq:VI:a}--\eqref{eq:VI:c} are of the first kind and
we use a standard framework for the analysis (Lions-Stampacchia theorem), see~\cite{Glowinski08,Glowinski81,KinderlehrerStampacchia}.
\begin{theorem}\label{thm:solvability}
  Suppose $\beta\geq 1+\CF^2$. Let $A\in \{\blfa,\blfb,\blfc\}$ and let $F: U\to\R$ denote a bounded linear
  functional. 
  If $K\subseteq U$ is a non-empty convex and closed subset, then the variational inequality
  \begin{align}\label{eq:VI:abstract}
    \text{Find }\uu\in K \text{ s.t. } A(\uu,\vv-\uu) \geq F(\vv-\uu) \quad\text{for all }\vv\in K
  \end{align}
  admits a unique solution.

  In particular, for $f\in L^2(\Omega)$, $g\in H_0^1(\Omega)$ each of the problems~\eqref{eq:VI:a},~\eqref{eq:VI:b},~\eqref{eq:VI:c} has a unique solution and the
  problems are
  equivalent to~\eqref{eq:model:fo}.
\end{theorem}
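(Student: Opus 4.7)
The plan is to invoke the classical Lions--Stampacchia theorem for bounded, coercive (possibly non-symmetric) bilinear forms on a Hilbert space (see, e.g., \cite{KinderlehrerStampacchia}). \Cref{lem:blf} supplies exactly its two hypotheses for each $A\in\{\blfa,\blfb,\blfc\}$ under the condition $\beta\geq 1+\CF^2$: boundedness and coercivity on all of $U$. Since $U$ equipped with $\norm{\cdot}U$ is a Hilbert space and $K$ is nonempty, closed, and convex by assumption, the abstract existence and uniqueness statement follows immediately from the theorem in its general (non-symmetric) form.

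For the ``in particular'' part I would first verify that $F_\beta,G_\beta,H_\beta$ are bounded linear functionals on $U$, which is where $g\in H_0^1(\Omega)$ is used. The terms $\ip{f}{\divergence\ttau+\mu}$ are bounded by $\norm{f}{}\norm{\vv}U$, while $\dual{\mu}{g}\leq\norm{\mu}{-1}\norm{\nabla g}{}$ makes sense because $\mu=(\divergence\ttau+\mu)-\divergence\ttau\in L^2(\Omega)+H^{-1}(\Omega)\subset H^{-1}(\Omega)$, and $\norm{\mu}{-1}$ is controlled by $\norm{\vv}U$ via the estimate given in \cref{sec:notation}. Next I would check that $K^s$, $K^0$, $K^1$ are nonempty (they all contain the solution of \eqref{eq:model:fo}), convex (the defining constraints $u-g\geq 0$ and $\lambda\geq 0$ are linear), and closed in $U$. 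For the constraint $u-g\geq 0$ a.e.\ one passes to a pointwise a.e.\ convergent subsequence of the $L^2$-convergent sequence $u_n\to u$. For the constraint $\lambda\geq 0$ in $H^{-1}(\Omega)$, if $\vv_n\to\vv$ in $U$ then $\ttau_n\to\ttau$ in $\LL^2(\Omega)$ and $\divergence\ttau_n+\lambda_n\to\divergence\ttau+\lambda$ in $L^2(\Omega)$, so $\lambda_n\to\lambda$ in $H^{-1}(\Omega)$, and the dual pairing with any $v\in H_0^1(\Omega)$, $v\geq 0$, passes to the limit preserving the inequality. With these ingredients the abstract result yields a unique solution to each of \eqref{eq:VI:a}, \eqref{eq:VI:b}, \eqref{eq:VI:c}.

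It remains to establish the equivalence with \eqref{eq:model:fo}. This direction is essentially free: the derivations preceding each variational inequality in \cref{sec:VI} have already shown that the (unique) solution $\uu=(u,\nabla u,-\Delta u-f)$ of \eqref{eq:model:fo} lies in $K^s\subset K^0\cap K^1$ and satisfies each of \eqref{eq:VI:a}--\eqref{eq:VI:c}. Combining this with the just-proven uniqueness for each variational inequality, the solution of \eqref{eq:model:fo} is the unique solution of each, giving the asserted equivalence.

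The main obstacle is not a deep one, since coercivity is already in hand from \cref{lem:blf}; the only genuinely technical point is checking closedness of $\{\lambda\geq 0\}$ inside $U$, which hinges on the observation that convergence in the stronger norm $\norm{\cdot}U$ does transmit $H^{-1}$-convergence of the multiplier component via the identity $\lambda=(\divergence\ttau+\lambda)-\divergence\ttau$.
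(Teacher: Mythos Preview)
Your proposal is correct and follows essentially the same route as the paper: invoke \cref{lem:blf} for boundedness and coercivity, apply the Lions--Stampacchia theorem, verify boundedness of the linear functionals, and conclude equivalence from uniqueness together with the fact that the solution of \eqref{eq:model:fo} satisfies each variational inequality. The only minor difference is that the paper bounds $\dual{\mu}{g}$ via the splitting $\dual{\mu}{g}=\ip{\divergence\ttau+\mu}{g}-\dual{\divergence\ttau}{g}$ rather than through $\norm{\mu}{-1}$, and it does not spell out the closedness argument for the convex sets (which you supply); these are cosmetic variations, not a different strategy.
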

\begin{proof}
  By the assumption on $\beta$,~\cref{lem:blf} proves that the bilinear forms are coercive and bounded.
  Then, unique solvability of~\eqref{eq:VI:abstract} follows from the Lions-Stampacchia theorem, 
  see, e.g.,~\cite{Glowinski08,Glowinski81,KinderlehrerStampacchia}.

  Unique solvability of~\eqref{eq:VI:a}--\eqref{eq:VI:c} follows since the functionals $F_\beta$, $G_\beta$, $H_\beta$
  are linear and bounded.
  Boundedness of $F_\beta$ can be seen from
  \begin{align*}
    |F_\beta(\vv)| &= |-\beta\ip{f}{\divergence\ttau+\mu}{} + \tfrac12\ip{\divergence\ttau+\mu}g -
    \tfrac12\dual{\divergence\ttau}g|  \\
    &\leq \beta \norm{f}{}\norm{\divergence\ttau+\mu}{} + \tfrac12\norm{\divergence\ttau+\mu}{}\norm{g}{} 
    + \tfrac12\norm{\ttau}{}\norm{\nabla g}{} \lesssim (\norm{f}{} +\norm{\nabla g}{})\norm{\vv}U.
  \end{align*}
  The same arguments prove that $G_\beta$ and $H_\beta$ are bounded.

  Finally, equivalence to~\eqref{eq:model:fo} follows since all problems admit unique solutions and by construction the
  solution of~\eqref{eq:model:fo} also solves each of the problems~\eqref{eq:VI:a}--\eqref{eq:VI:c}.
\end{proof}

\begin{remark}
  We stress that the assumption $g\in H_0^1(\Omega)$ is necessary. If $g\in H^1(\Omega)$ then 
  the term $\dual{\mu}g$ in $F_\beta$, $H_\beta$ is not well-defined.
  However, this term does not appear in $G_\beta$ and therefore the variational inequality in~\eqref{eq:VI:b} admits a
  unique solution if we only assume $g\in H^1(\Omega)$ with $g|_\Gamma\leq 0$.
\end{remark}

\begin{remark}
  The variational inequality~\eqref{eq:VI:a} corresponds to a least-squares finite element method with convex functional
  \begin{align*}
    J_\beta(\uu;f,g) := \blfa(\uu,\uu)-2F_\beta(\uu)+\beta\ip{f}f.
  \end{align*}
  Then,~\cref{thm:solvability} proves that the problem
  \begin{align*}
    J_\beta(\uu;f,g) = \min_{\vv\in K} J_\beta(\vv;f,g)
  \end{align*}
  admits a unique solution for all non-empty convex and closed sets $K\subseteq U$. 
  Moreover, $J_\beta(\uu;f,g)\simeq J(\uu;f,g)$ for $\uu\in K^s$, so that this problem is equivalent to~\eqref{eq:lsq}
  for $K=K^s$.
\end{remark}

\subsection{A priori analysis}\label{sec:apriori}
The following three results provide general bounds on the approximation error.
The proofs are based on standard arguments, see, e.g.,~\cite{Falk74}. 
We give details for the proof of the first result, the
others follow the same lines of argumentation and are left to the reader.

\begin{theorem}\label{thm:VI:a}
  Suppose $\beta\geq 1+\CF^2$. 
  Let $\uu\in K^s$ denote the solution of~\eqref{eq:VI:a}, where $f\in L^2(\Omega)$, $g\in H_0^1(\Omega)$.
  Let $K_h\subset U$ denote a non-empty convex and closed subset and let $\uu_h \in K_h$ denote the solution
  of~\eqref{eq:VI:abstract} with $A=\blfa$, $F=F_\beta$ and $K=K_h$. 
  It holds that
  \begin{align*}
    \norm{\uu-\uu_h}U^2 &\leq \Copt \Big( \inf_{\vv_h\in K_h} \big( \norm{\uu-\vv_h}U^2 + 
    |\dual{\lambda}{v_h-u}+\dual{\mu_h-\lambda}{u-g}| \big)
    \\
    &\qquad\qquad + \inf_{\vv\in K^s} | \dual{\lambda}{v-u_h} + \dual{\mu-\lambda_h}{u-g}|\Big).
  \end{align*}
  The constant $\Copt>0$ depends only on $\beta$ and $\Omega$.
\end{theorem}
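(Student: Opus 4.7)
The plan is a Falk-type Céa argument adapted to the fact that the discrete convex set $K_h$ need not be contained in $K^s$. The starting point is coercivity of $\blfa$ from \cref{lem:blf}, which yields $\|\uu-\uu_h\|_U^2 \lesssim \blfa(\uu-\uu_h,\uu-\uu_h)$. For any $\vv_h\in K_h$ I split
\begin{align*}
  \blfa(\uu-\uu_h,\uu-\uu_h)=\blfa(\uu-\uu_h,\uu-\vv_h)+\blfa(\uu,\vv_h-\uu_h)-\blfa(\uu_h,\vv_h-\uu_h),
\end{align*}
bound the first term by $C_\beta\|\uu-\uu_h\|_U\|\uu-\vv_h\|_U$ via boundedness, and use the discrete variational inequality $\blfa(\uu_h,\vv_h-\uu_h)\geq F_\beta(\vv_h-\uu_h)$ to replace the last term, producing the consistency expression $\blfa(\uu,\vv_h-\uu_h)-F_\beta(\vv_h-\uu_h)$.

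The second step is to exploit the structure of the residual. Since the exact solution satisfies $\divergence\ssigma+\lambda=-f$ and $\nabla u-\ssigma=0$, a direct computation gives the identity
\begin{align*}
  \blfa(\uu,\vv)-F_\beta(\vv) = \tfrac12\bigl(\dual{\mu}{u-g}+\dual{\lambda}{v}\bigr)
  \quad\text{for every }\vv=(v,\ttau,\mu)\in U,
\end{align*}
which holds for \emph{any} test function (not just those in $K^s$). This identity will be the engine that converts the consistency terms into the duality pairings in the statement.

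The third step deals with the fact that $\uu_h\notin K^s$. Writing $\vv_h-\uu_h=(\vv_h-\uu)+(\uu-\uu_h)$ and, for any $\vv\in K^s$, further splitting $\uu-\uu_h=(\uu-\vv)+(\vv-\uu_h)$, the continuous variational inequality \eqref{eq:VI:a} gives $\blfa(\uu,\vv-\uu)-F_\beta(\vv-\uu)\geq0$, whose negative can be discarded. Applying the identity above to $\vv_h-\uu$ (which has $\mu$-component $\mu_h-\lambda$) and to $\vv-\uu_h$ (with $\mu$-component $\mu-\lambda_h$), one obtains
\begin{align*}
  \blfa(\uu,\vv_h-\uu_h)-F_\beta(\vv_h-\uu_h)
  \leq \tfrac12\bigl(\dual{\lambda}{v_h-u}+\dual{\mu_h-\lambda}{u-g}\bigr)
      + \tfrac12\bigl(\dual{\lambda}{v-u_h}+\dual{\mu-\lambda_h}{u-g}\bigr).
\end{align*}

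Finally, combining the two steps and absorbing $C_\beta\|\uu-\uu_h\|_U\|\uu-\vv_h\|_U$ into the left-hand side by Young's inequality yields the quadratic bound in $\|\uu-\vv_h\|_U$ together with the two absolute-value terms; taking the infimum over $\vv_h\in K_h$ and $\vv\in K^s$ separately produces exactly the asserted estimate. The main subtlety is the third step: one must introduce both a $K_h$-approximant and a $K^s$-approximant simultaneously, use the continuous VI in just the right direction so that the resulting inequality is usable, and then reassemble the pieces through the residual identity so that the error $\uu-\uu_h$ itself does not reappear on the right-hand side. Everything else reduces to the coercivity and boundedness already established in \cref{lem:blf}.
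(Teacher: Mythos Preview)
Your proof is correct and follows essentially the same Falk-type strategy as the paper: coercivity, the residual identity $\blfa(\uu,\ww)-F_\beta(\ww)=\tfrac12(\dual{\lambda}{w}+\dual{\nu}{u-g})$, the two variational inequalities, and Young's inequality. The only cosmetic differences are that you split $\blfa(\uu-\uu_h,\uu-\uu_h)$ so as to isolate the boundedness term $\blfa(\uu-\uu_h,\uu-\vv_h)$ immediately (the paper instead recovers this term later by applying the residual identity with $\ww=\uu-\vv_h$), and that you invoke the continuous inequality \eqref{eq:VI:a} abstractly where the paper unpacks the equivalent sign conditions $\dual{\lambda}{u-g}=0\le\dual{\lambda}{v-g}$ and $0\le\dual{\mu}{u-g}$ by hand.
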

\begin{proof}
  Throughout let $\vv=(v,\ttau,\mu)\in K^s$, $\vv_h=(v_h,\ttau_h,\mu_h)\in K_h$ and let 
  $\uu=(u,\ssigma,\lambda)\in K^s$ denote the exact solution of~\eqref{eq:VI:a}. Thus, $\divergence\ssigma+\lambda+f=0$
  and $\nabla u-\ssigma = 0$.
  For arbitrary $\ww = (w,\cchi,\nu)\in U$ it holds that
  \begin{align}
    \begin{split}\label{eq:VI:a:id}
    \blfa(\uu,\ww) &= \beta\ip{\divergence\ssigma+\lambda}{\divergence\cchi+\nu} 
    + \ip{\nabla u-\ssigma}{\nabla w-\cchi} + \tfrac12(\dual{\lambda}w+\dual\nu{u}) \\
    &= -\beta\ip{f}{\divergence\cchi+\nu} +\tfrac12\dual\nu{g} 
    +\tfrac12(\dual{\lambda}w + \dual\nu{u-g}) \\
    &= F_\beta(\ww) + \tfrac12(\dual{\lambda}w + \dual\nu{u-g}).
    \end{split}
  \end{align}
  Using coercivity of $\blfa(\cdot,\cdot)$, identity~\eqref{eq:VI:a:id} and the fact that $\uu_h$ solves the discretized 
  variational inequality (on $K_h$) shows that
  \begin{align*}
    \norm{\uu-\uu_h}U^2 &\lesssim \blfa(\uu-\uu_h,\uu-\uu_h) \\& = \blfa(\uu,\uu-\uu_h) - \blfa(\uu_h,\uu-\vv_h)
    -\blfa(\uu_h,\vv_h-\uu_h) \\
    &\leq F_\beta(\uu-\uu_h) + \tfrac12(\dual{\lambda}{u-u_h} + \dual{\lambda-\lambda_h}{u-g})
    \\&\qquad- \blfa(\uu_h,\uu-\vv_h) - F_\beta(\vv_h-\uu_h)
    \\&= F_\beta(\uu-\vv_h) + \tfrac12(\dual{\lambda}{u-u_h} + \dual{\lambda-\lambda_h}{u-g}) 
    - \blfa(\uu_h,\uu-\vv_h)
  \end{align*}
  Note that $0=\dual{\lambda}{u-g}\leq \dual{\lambda}{v-g}$ and $\dual{\lambda}{u-g}\leq \dual{\mu}{u-g}$. Hence,
  \begin{align*}
    \dual{\lambda}{u-u_h}+\dual{\lambda-\lambda_h}{u-g} &
    = \dual{\lambda}{u-g+g-u_h} + \dual{\lambda-\lambda_h}{u-g}
    \\
    &\leq \dual{\lambda}{v-g+g-u_h} + \dual{\mu-\lambda_h}{u-g}.
  \end{align*}
  This and identity~\eqref{eq:VI:a:id} with $\ww =\uu-\vv_h$ imply that
  \begin{align*}
    &F_\beta(\uu-\vv_h)-\blfa(\uu_h,\uu-\vv_h) + \tfrac12(\dual{\lambda}{u-u_h} + \dual{\lambda-\lambda_h}{u-g})
    \\
    &\quad\leq \blfa(\uu-\uu_h,\uu-\vv_h) - \tfrac12(\dual{\lambda}{u-v_h}+\dual{\lambda-\mu_h}{u-g}) 
    \\ &\qquad + \tfrac12(\dual{\lambda}{v-u_h} + \dual{\mu-\lambda_h}{u-g}).
  \end{align*}
  Putting altogether, boundedness of $\blfa(\cdot,\cdot)$ and an application of Young's inequality with parameter
  $\delta>0$ show that
  \begin{align*}
    \norm{\uu-\uu_h}U^2 &\lesssim \frac{\delta}2 \norm{\uu-\uu_h}U^2 + \frac{\delta^{-1}}2\norm{\uu-\vv_h}U^2
    + |\dual{\lambda}{v_h-u}+\dual{\mu_h-\lambda}{u-g}|
    \\ &\qquad
    + |\dual{\lambda}{v-u_h} + \dual{\mu-\lambda_h}{u-g}|.
  \end{align*}
  Subtracting the term $\delta/2\norm{\uu-\uu_h}U^2$ for some sufficiently $\delta>0$ finishes the proof since $\vv\in
  K^s$, $\vv_h\in K_h$ are arbitrary.
\end{proof}

\begin{theorem}\label{thm:VI:b}
  Suppose $\beta\geq 1+\CF^2$.
  Let $\uu\in K^0$ denote the solution of~\eqref{eq:VI:b}, 
  where $f\in L^2(\Omega)$, $g\in H^1(\Omega)$ with $g|_\Gamma\leq 0$.
  Let $K_h\subset U$ denote a non-empty convex and closed subset and let $\uu_h \in K_h$ denote the solution
  of~\eqref{eq:VI:abstract} with $A=\blfb$, $F=G_\beta$, and $K=K_h$. 
  It holds that
  \begin{align*}
    \norm{\uu-\uu_h}U^2 &\leq \Copt \Big( \inf_{\vv_h\in K_h}\big(\norm{\uu-\vv_h}U^2 + 
    |\dual{\lambda}{v_h-u}|\big)
     + \inf_{\vv\in K^0} | \dual{\lambda}{v-u_h}|\Big).
  \end{align*}
  The constant $\Copt>0$ depends only on $\beta$ and $\Omega$.
\end{theorem}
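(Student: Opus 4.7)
My plan is to follow the same template as the proof of \cref{thm:VI:a}, but with $\blfb$ in place of $\blfa$; the proof should actually be somewhat simpler because $\blfb$ carries the single non-symmetric term $\dual{\lambda}v$ (rather than the symmetrised $\tfrac12(\dual{\lambda}v+\dual{\mu}u)$), and the convex set $K^0$ involves only the constraint $v\geq g$.

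First, I would derive the analogue of identity~\eqref{eq:VI:a:id}: since the exact solution $\uu=(u,\ssigma,\lambda)\in K^0$ of~\eqref{eq:VI:b} satisfies $\divergence\ssigma+\lambda+f=0$ and $\nabla u-\ssigma=0$, for any $\ww=(w,\cchi,\nu)\in U$ we have
\begin{align*}
  \blfb(\uu,\ww) = \beta\ip{\divergence\ssigma+\lambda}{\divergence\cchi+\nu}+\ip{\nabla u-\ssigma}{\nabla w-\cchi}+\dual{\lambda}w = G_\beta(\ww)+\dual{\lambda}w.
\end{align*}
Then, given $\vv_h=(v_h,\ttau_h,\mu_h)\in K_h$ arbitrary, I would use coercivity from \cref{lem:blf} and the algebraic splitting $\blfb(\uu-\uu_h,\uu-\uu_h)=\blfb(\uu,\uu-\uu_h)-\blfb(\uu_h,\uu-\vv_h)-\blfb(\uu_h,\vv_h-\uu_h)$, apply the identity above to the first term, and use the discrete variational inequality $\blfb(\uu_h,\vv_h-\uu_h)\geq G_\beta(\vv_h-\uu_h)$ for the third. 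Re-using the identity on $\ww=\uu-\vv_h$ to rewrite $G_\beta(\uu-\vv_h)$ yields
\begin{align*}
  \norm{\uu-\uu_h}U^2 \lesssim \blfb(\uu-\uu_h,\uu-\vv_h) + \dual{\lambda}{v_h-u_h}.
\end{align*}

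The key step, and the only place where the structure of $K^0$ enters, is the treatment of $\dual{\lambda}{v_h-u_h}$. For arbitrary $\vv=(v,\ttau,\mu)\in K^0$ I would use $\lambda\geq 0$, $v\geq g$ and the complementarity $\dual{\lambda}{u-g}=0$ to obtain
\begin{align*}
  \dual{\lambda}{u-v} = \dual{\lambda}{u-g}-\dual{\lambda}{v-g} = -\dual{\lambda}{v-g} \leq 0,
\end{align*}
and hence $\dual{\lambda}{v_h-u_h} = \dual{\lambda}{v_h-u}+\dual{\lambda}{u-v}+\dual{\lambda}{v-u_h} \leq |\dual{\lambda}{v_h-u}| + |\dual{\lambda}{v-u_h}|$.

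Finally I would invoke boundedness of $\blfb$ from \cref{lem:blf}, apply Young's inequality with a small parameter $\delta>0$ on the term $\blfb(\uu-\uu_h,\uu-\vv_h)\leq C_\beta\norm{\uu-\uu_h}U\norm{\uu-\vv_h}U$, absorb $\tfrac{\delta}{2}\norm{\uu-\uu_h}U^2$ into the left-hand side, and take the infima over $\vv_h\in K_h$ and $\vv\in K^0$. The main (mild) obstacle is the sign argument for $\dual{\lambda}{u-v}$, which is the analogue of the manipulation in the proof of \cref{thm:VI:a}; here it is easier because only the constraint $v\geq g$ is involved, with no dual constraint $\mu\geq 0$ to track.
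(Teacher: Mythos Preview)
Your approach is correct and matches exactly the template the paper intends (it explicitly says the proof of \cref{thm:VI:b} ``follows the same lines of argumentation'' as \cref{thm:VI:a}). There is, however, one small technical point in your sign argument that deserves care.

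In \cref{thm:VI:b} the obstacle is only assumed to satisfy $g\in H^1(\Omega)$ with $g|_\Gamma\leq 0$, \emph{not} $g\in H_0^1(\Omega)$. Consequently $u-g$ and $v-g$ need not lie in $H_0^1(\Omega)$, and the duality pairings $\dual{\lambda}{u-g}$ and $\dual{\lambda}{v-g}$ that you use as intermediate quantities are not a priori well-defined (recall $\lambda\in H^{-1}(\Omega)=(H_0^1(\Omega))^*$). The conclusion $\dual{\lambda}{u-v}\leq 0$ for $v\in H_0^1(\Omega)$, $v\geq g$, is nonetheless valid: it is precisely the content of the original variational inequality~\eqref{eq:model:VI}, rewritten with $\lambda=-\Delta u-f$, as in the paper's derivation of~\eqref{eq:VI:b}. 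So in your key step you should invoke $\dual{\lambda}{v-u}\geq 0$ directly rather than splitting through $g$. With that adjustment your argument goes through verbatim.
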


\begin{theorem}\label{thm:VI:c}
  Suppose $\beta\geq 1+\CF^2$.
  Let $\uu\in K^1$ denote the solution of~\eqref{eq:VI:c}, where $f\in L^2(\Omega)$, $g\in H_0^1(\Omega)$.
  Let $K_h\subset U$ denote a non-empty convex and closed subset and let $\uu_h \in K_h$ denote the solution
  of~\eqref{eq:VI:abstract} with $A=\blfc$, $F=H_\beta$, and $K=K_h$. 
  It holds that
  \begin{align*}
    \norm{\uu-\uu_h}U^2 &\leq \Copt \Big( \inf_{\vv_h\in K_h}\big(\norm{\uu-\vv_h}U^2 + 
    |\dual{\mu_h\!-\!\lambda}{u\!-\!g}|\big)
     + \inf_{\vv\in K^1} | \dual{\mu\!-\!\lambda_h}{u\!-\!g}|\Big).
  \end{align*}
  The constant $\Copt>0$ depends only on $\beta$ and $\Omega$.
\end{theorem}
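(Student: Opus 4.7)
The plan is to mimic the proof of \cref{thm:VI:a} using the non-symmetric bilinear form $\blfc$ and the convex set $K^1$ (in which only the sign constraint on the multiplier is imposed). The first step is to derive the analogue of the testing identity \eqref{eq:VI:a:id}. Since the exact solution $\uu=(u,\ssigma,\lambda)$ satisfies $\divergence\ssigma+\lambda+f=0$ and $\nabla u-\ssigma=0$, for arbitrary $\ww=(w,\cchi,\nu)\in U$ one obtains
\begin{align*}
  \blfc(\uu,\ww) &= \beta\ip{\divergence\ssigma+\lambda}{\divergence\cchi+\nu} + \ip{\nabla u-\ssigma}{\nabla w-\cchi} + \dual{\nu}{u} \\
  &= -\beta\ip{f}{\divergence\cchi+\nu} + \dual{\nu}{g} + \dual{\nu}{u-g} = H_\beta(\ww) + \dual{\nu}{u-g}.
\end{align*}

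Next, I would use coercivity of $\blfc$ (\cref{lem:blf}) together with the discrete variational inequality to estimate, for arbitrary $\vv_h=(v_h,\ttau_h,\mu_h)\in K_h$,
\begin{align*}
  \norm{\uu-\uu_h}{U}^2 &\lesssim \blfc(\uu-\uu_h,\uu-\uu_h) \\
  &= \blfc(\uu,\uu-\uu_h) - \blfc(\uu_h,\uu-\vv_h) - \blfc(\uu_h,\vv_h-\uu_h) \\
  &\leq H_\beta(\uu-\uu_h) + \dual{\lambda-\lambda_h}{u-g} - \blfc(\uu_h,\uu-\vv_h) - H_\beta(\vv_h-\uu_h) \\
  &= H_\beta(\uu-\vv_h) - \blfc(\uu_h,\uu-\vv_h) + \dual{\lambda-\lambda_h}{u-g},
\end{align*}
where the identity above is used with $\ww=\uu-\uu_h$ and hence $\nu=\lambda-\lambda_h$. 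A second application of the identity with $\ww=\uu-\vv_h$ then transforms the first two remaining terms into $\blfc(\uu-\uu_h,\uu-\vv_h) + \dual{\mu_h-\lambda}{u-g}$.

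The key sign step — and where \eqref{eq:VI:c} differs cleanly from \eqref{eq:VI:a} — is to control the leftover term $\dual{\lambda-\lambda_h}{u-g}$. For arbitrary $\vv=(v,\ttau,\mu)\in K^1$ one has $\mu\geq 0$, so $\dual{\mu}{u-g}\geq 0$ since $u\geq g$; together with the complementarity $\dual{\lambda}{u-g}=0$ this gives $\dual{\lambda-\mu}{u-g}\leq 0$. Writing $\dual{\lambda-\lambda_h}{u-g}=\dual{\lambda-\mu}{u-g}+\dual{\mu-\lambda_h}{u-g}$ yields $\dual{\lambda-\lambda_h}{u-g}\leq\dual{\mu-\lambda_h}{u-g}$. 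Combining everything, boundedness of $\blfc$ (\cref{lem:blf}) and Young's inequality to absorb a small multiple of $\norm{\uu-\uu_h}{U}^2$ on the right yield the stated bound after taking the infima over $\vv_h\in K_h$ and $\vv\in K^1$.

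The main obstacle, as in \cref{thm:VI:a}, is the book-keeping of the duality pairings: one must split $\dual{\lambda-\lambda_h}{u-g}$ so that only quantities admissible in $K_h$ and $K^1$ appear, and invoke the compatibility $\dual{\lambda}{u-g}=0$ together with the sign property of $K^1$ at the correct step. There is no counterpart here to the $\dual{\lambda}{w}$-type term that appears for $\blfa$, which in fact simplifies the argument: only the $\dual{\mu}{u}$ contribution built into $\blfc$ has to be tracked, and this is precisely what produces the two duality terms present in the final estimate.
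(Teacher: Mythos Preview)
Your proposal is correct and follows precisely the template of the paper's proof of \cref{thm:VI:a}, which is exactly what the paper intends (it states that the proofs of \cref{thm:VI:b,thm:VI:c} ``follow the same lines of argumentation and are left to the reader''). The simplification you note---that only the $\dual{\nu}{u-g}$ pairing appears in the testing identity for $\blfc$, so the sign step reduces to $\dual{\lambda-\lambda_h}{u-g}\leq\dual{\mu-\lambda_h}{u-g}$ via $\dual{\lambda}{u-g}=0$ and $\dual{\mu}{u-g}\geq0$---is exactly the right adaptation.
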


\subsection{Discretization}
Let $\TT$ denote a regular triangulation of 
$\Omega$, $\bigcup_{T\in\TT} \overline{T}=\overline\Omega$. We assume that $\TT$ is $\kappa$-shape regular, i.e.,
\begin{align*}
  \sup_{T\in\TT} \frac{\diam(T)^n}{|T|} \leq \kappa < \infty.
\end{align*}
Moreover, let $\NN$ denote the nodes of the mesh $\TT$ and $h_\TT\in L^\infty(\Omega)$ the mesh-size function, $h_\TT|_T
:= h_T := \diam(T)$ for $T\in\TT$. Set $h:=\max_{T\in\TT}\diam(T)$.
We use standard finite element spaces for the discretization. Let $\PP^p(\TT)$ denote the space of $\TT$-elementwise
polynomials of degree less or equal than $p\in\N_0$.
Let $\RT^p(\TT)$ denote the Raviart-Thomas space of degree $p\in\N_0$, $\cS_0^{p+1}(\TT) := \PP^{p+1}(\TT)\cap
H_0^1(\Omega)$, and
\begin{align*}
  U_{hp} := \cS_0^{p+1}(\TT) \times \RT^p(\TT) \times \PP^p(\TT).
\end{align*}
Clearly, $U_{hp} \subset U$.
We stress that the polynomial degree is chosen, so that the best approximation in the norm $\norm{\cdot}U$ is of order
$h^{p+1}$.

To define admissible convex sets for the discrete variational inequalities we need to put constraints on functions from 
the space $\cS_0^{p+1}(\TT)$ or from $\PP^p(\TT)$ or both.
Let us remark that for a polynomial degree $\geq 2$ such constraints are not straightforward to implement.
One possibility would be to impose such constraints pointwise and then
analyse the consistency error (this can be done with the results from~\cref{sec:apriori}).
For some $hp$-FEM method for elliptic obstacle problems we refer to~\cite{BanzSchroeder,BanzStephan}.
In order to avoid such quite technical treatments and for a simpler representation of the basic ideas
we consider from now on the lowest-order case only, where the linear constraints can easily be built in.
To that end define the non-empty convex subsets
\begin{align}
  K_h^s &:= \set{(v_h,\ttau_h,\mu_h)\in U_{h0}}{\mu_h\geq0, \, v_h(x)\geq g(x) \text{ for all } x\in\NN}, \label{eq:dCone:a}\\
  K_h^0 &:= \set{(v_h,\ttau_h,\mu_h)\in U_{h0}}{v_h(x)\geq g(x) \text{ for all } x\in\NN}, \label{eq:dCone:b}\\
  K_h^1 &:= \set{(v_h,\ttau_h,\mu_h)\in U_{h0}}{\mu_h\geq0}. \label{eq:dCone:c}
\end{align}
In the definition of $K_h^s$, $K_h^0$ we assume $g\in H^1(\Omega)\cap C^0(\overline\Omega)$ so that the
point evaluation is well-defined.

For the analysis of the convergence rates we use the nodal interpolation operator $I_h : H^2(\Omega)\to
\cS^1(\TT):= \PP^1(\TT)\cap C^0(\overline\Omega)$,
the Raviart-Thomas projector $\Pidiv_h : H^1(\Omega)^n \to \RT^0(\TT)$, and the $L^2(\Omega)$ projector
$\Pi_h : L^2(\Omega)\to \PP^0(\TT)$.
Observe that with $v\geq 0$, $\mu\geq0$ we have (with sufficient regularity) that $I_h v\geq 0$, $\Pi_h\mu\geq 0$. 
Moreover, recall the commutativity property $\divergence\Pidiv_h = \Pi_h\divergence$, as well as the 
approximation properties
\begin{align}
  \norm{v-I_hv}{} + h\norm{\nabla(v-I_hv)}{} &\lesssim h^2\norm{D^2v}{}, \\
  \norm{\ttau-\Pidiv_h\ttau}{} &\lesssim h \norm{\nabla \ttau}{}, \\
  \norm{\mu-\Pi_h\mu}{} &\lesssim \norm{h_\TT\pwnabla\mu}{}.
\end{align}
Here, $\nabla\ttau$ is understood componentwise, $\pwnabla\mu$ denotes the $\TT$-elementwise gradient of $\mu\in
H^1(\TT) := \set{\nu\in L^2(\Omega)}{\nu|_T \in H^1(T), \, T\in\TT}$. 
Set $\norm{\nu}{H^1(\TT)}^2 := \norm\nu{}^2 + \norm{\pwnabla\nu}{}^2$.
The involved constants depend only on the $\kappa$-shape regularity of $\TT$ but are otherwise independent of $\TT$.
Furthermore, for $\mu\in L^2(\Omega)$, it also holds that
\begin{align*}
  \norm{\mu-\Pi_h\mu}{-1} \lesssim \norm{h_\TT(\mu-\Pi_h\mu)}{},
\end{align*}
which follows from the definition of the dual norm, the projection and approximation property of $\Pi_h$.

\begin{theorem}\label{thm:VI:a:conv}
  Suppose $\beta\geq 1+\CF^2$. 
  Let $\uu\in K^s$ denote the solution of~\eqref{eq:VI:a} with data $f\in L^2(\Omega)$, $g\in H_0^1(\Omega)$.
  Let $K_h^s$ denote the set defined in~\eqref{eq:dCone:a} and let $\uu_h \in K_h^s$ denote the solution
  of~\eqref{eq:VI:abstract} with $A=\blfa$, $F=F_\beta$, and $K=K_h^s$. 
  If $u \in H^2(\Omega)$, $\lambda\in H^1(\TT)$, $g\in H^2(\Omega)$ and $f\in H^1(\TT)$, then
  \begin{align*}
    \norm{\uu-\uu_h}U &\leq \Capp h (\norm{u}{H^2(\Omega)} + \norm{\pwnabla f}{} +\norm{\lambda}{H^1(\TT)} + \norm{g}{H^2(\Omega)}).
  \end{align*}
  The constant $\Capp>0$ depends only on $\beta$, $\Omega$, and $\kappa$-shape regularity of $\TT$.
\end{theorem}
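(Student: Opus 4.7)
The plan is to apply the abstract a priori estimate of \cref{thm:VI:a} and then to bound each of its three ingredients by $O(h^2)$ through concrete choices of $\vv_h \in K_h^s$ and $\vv \in K^s$, so that extracting the square root gives the stated rate. For the discrete test function I take the natural triple of commuting projections $\vv_h := (I_h u, \Pidiv_h \ssigma, \Pi_h \lambda)$. Admissibility in $K_h^s$ is immediate: the nodal constraint holds since $I_h u(x) = u(x) \geq g(x)$ at each $x \in \NN$ by $u \geq g$ and the continuity of $u$ and $g$ provided by the $H^2$ regularity, while the elementwise average $\Pi_h \lambda$ inherits nonnegativity from $\lambda$. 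The approximation term $\norm{\uu - \vv_h}{U}^2$ is controlled by the standard interpolation rates for $I_h$ and $\Pidiv_h$; the one component that requires a moment of care is the divergence piece, where the commutativity $\divergence \Pidiv_h = \Pi_h \divergence$ together with $\divergence \ssigma + \lambda = -f$ reduces the residual to $-(f - \Pi_h f)$, giving an order-$h$ bound via $\norm{\pwnabla f}{}$.

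The discrete consistency term $|\dual{\lambda}{v_h - u} + \dual{\mu_h - \lambda}{u - g}|$ then splits cleanly: $|\dual{\lambda}{I_h u - u}|$ is bounded by $\norm{\lambda}{}\,\norm{u - I_h u}{} \lesssim h^2$, and for $\dual{\Pi_h \lambda - \lambda}{u - g}$ I insert the $L^2$-orthogonality of $\Pi_h$, rewriting the expression as $\dual{\Pi_h \lambda - \lambda}{(u - g) - \Pi_h(u - g)}$ and applying elementwise approximation to both factors to obtain the product $O(h)\cdot O(h)$.

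For the continuous test function I would take the Falk-style choice $\vv := (\max(u_h, g),\, 0,\, \lambda + \lambda_h)$, which lies in $K^s$ because $u_h|_\Gamma = 0$ and $g|_\Gamma = 0$ force $\max(u_h, g) \in H_0^1(\Omega)$ and because $\lambda + \lambda_h \geq 0$. With this choice the multiplier contribution collapses by complementarity, $\dual{(\lambda + \lambda_h) - \lambda_h}{u - g} = \dual{\lambda}{u - g} = 0$, leaving only $\dual{\lambda}{(g - u_h)_+}$ to estimate.

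The main obstacle, and the only genuinely non-routine step, is bounding this last term without invoking the unknown magnitude of $u_h$. The key observation is classical for nodal-obstacle discretizations: since $u_h$ and $I_h g$ are both $\TT$-piecewise linear and continuous and satisfy $u_h(x) \geq g(x) = I_h g(x)$ at every $x \in \NN$ by the definition of $K_h^s$, affine linearity on each simplex forces $u_h \geq I_h g$ pointwise in $\Omega$. Consequently $(g - u_h)_+ \leq |g - I_h g|$ almost everywhere, and since $\lambda \in L^2(\Omega)$ with $\lambda \geq 0$ we obtain $\dual{\lambda}{(g - u_h)_+} \leq \norm{\lambda}{}\,\norm{g - I_h g}{} \lesssim h^2 \norm{\lambda}{}\,\norm{g}{H^2(\Omega)}$. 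Collecting the three $O(h^2)$ bounds through \cref{thm:VI:a} and taking square roots yields the claim, with $\Capp$ depending only on $\beta$, $\Omega$, and the shape-regularity constant $\kappa$.
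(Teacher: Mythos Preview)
Your proof is correct and follows essentially the same route as the paper: the same projected triple $\vv_h=(I_hu,\Pidiv_h\ssigma,\Pi_h\lambda)$ with the commutativity argument for the divergence residual, the same Falk-style choice $v=\max(u_h,g)$ together with the pointwise comparison $u_h\ge I_hg$ to bound $\dual{\lambda}{(g-u_h)_+}$ by $\norm{\lambda}{}\,\norm{g-I_hg}{}$. The only cosmetic differences are that the paper takes $\mu=\lambda_h$ (your $\lambda+\lambda_h$ works equally well since $\dual{\lambda}{u-g}=0$) and estimates $\dual{(1-\Pi_h)\lambda}{u-g}$ via the $H^{-1}$ bound $\norm{(1-\Pi_h)\lambda}{-1}\lesssim\norm{h_\TT(1-\Pi_h)\lambda}{}$ rather than your $L^2$-orthogonality trick; both yield the same $O(h^2)$.
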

\begin{proof}
  Choose $\vv_h = (I_hu,\Pidiv_h\ssigma,\Pi_h\lambda)\in K_h^s$. The commutativity property of $\Pidiv_h$ shows that
  \begin{align*}
    \divergence(\ssigma-\Pidiv_h\ssigma)+\lambda-\Pi_h\lambda = (1-\Pi_h)(\divergence\ssigma+\lambda) = (1-\Pi_h)f.
  \end{align*}
  Therefore, using the approximation properties of the involved operators proves
  \begin{align*}
    \norm{\uu-\vv_h}U \leq \norm{(1-\Pi_h)f}{} + \norm{\ssigma-\Pidiv_h\ssigma}{} + \norm{\nabla(u-I_h u)}{}
    \lesssim h\norm{\pwnabla f}{} + h\norm{u}{H^2(\Omega)}.
  \end{align*}
  Moreover, 
  \begin{align*}
    |\dual{\lambda}{I_h u-u}| \leq \norm{\lambda}{}h^2\norm{D^2u}{}\lesssim
    h^2(\norm{u}{H^2(\Omega)}^2+\norm{\lambda}{}^2)
  \end{align*}
  and 
  \begin{align*}
    |\dual{\Pi_h\lambda-\lambda}{u-g}| &\leq \norm{(1-\Pi_h)\lambda}{-1}\norm{\nabla(u-g)}{}
    \lesssim h^2 \norm{\pwnabla\lambda}{}\big(\norm{\nabla u}{}+\norm{\nabla g}{}\big).
  \end{align*}
  Summing up we have that
  \begin{align*}
    &\inf_{\vv_h\in K_h^s} \big( \norm{\uu-\vv_h}U^2 + 
    |\dual{\lambda}{v_h-u}+\dual{\mu_h-\lambda}{u-g}| \big)
    \\
    &\qquad\quad\lesssim h^2 \big(\norm{u}{H^2(\Omega)}^2 + \norm{\pwnabla f}{}^2 
    +\norm{\pwnabla\lambda}{}^2 + \norm{\nabla g}{}^2\big).
  \end{align*}
  Therefore, in view of~\cref{thm:VI:a} it only remains to estimate the consistency error
  \begin{align*}
    \inf_{\vv\in K^s} | \dual{\lambda}{v-u_h} + \dual{\mu-\lambda_h}{u-g}|.
  \end{align*}
  Define $\vv := (v,\cchi,\mu):=(v,0,\lambda_h)\in U$ with $v:=\sup\{u_h,g\}$ and observe that $\vv\in K^s$.
  This directly leads to $\dual{\mu-\lambda_h}{u-g} = 0$. For the remaining term we follow the seminal 
  work~\cite{Falk74} of Falk.
  The same lines as in the proof of~\cite[Lemma~4]{Falk74} show that
  \begin{align*}
    |\dual{\lambda}{v-u_h}|\leq \norm{\lambda}{} \norm{v-u_h}{} \leq 
    \norm{\lambda}{} \norm{g-I_hg}{} \lesssim
    h^2\norm{g}{H^2(\Omega)}\norm{\lambda}{}.
  \end{align*}
  This finishes the proof.
\end{proof}

The proof of the following result can be obtained in the same fashion as the previous one and is therefore omitted.
Note that in contrast to the last result the additional regularity assumption on 
the Lagrange multiplier $\lambda\in H^1(\TT)$ is not needed.

\begin{theorem}\label{thm:VI:b:conv}
  Suppose $\beta\geq 1+\CF^2$. 
  Let $\uu\in K^0$ denote the solution of~\eqref{eq:VI:b} with data $f\in L^2(\Omega)$, $g\in H^1(\Omega)$,
  $g|_\Gamma\leq 0$.
  Let $\uu_h \in K_h$ denote the solution
  of~\eqref{eq:VI:abstract} with $A=\blfb$, $F=G_\beta$, and $K=K_h$, where either $K_h=K_h^s$ or $K_h=K_h^0$.
  If $u \in H^2(\Omega)$, $g\in H^2(\Omega)$ and $f\in H^1(\TT)$, then
  \begin{align*}
    \norm{\uu-\uu_h}U &\leq \Capp h (\norm{u}{H^2(\Omega)} + \norm{\pwnabla f}{} +\norm{\lambda}{} + \norm{g}{H^2(\Omega)}).
  \end{align*}
  The constant $\Capp>0$ depends only on $\beta$, $\Omega$, and $\kappa$-shape regularity of $\TT$.
\end{theorem}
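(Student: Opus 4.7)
The plan is to follow exactly the blueprint of the proof of \cref{thm:VI:a:conv}, feeding into the a priori estimate provided by \cref{thm:VI:b}, and only noting where the simpler structure of (\ref{eq:VI:b}) lets us drop hypotheses.

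First, I would pick the obvious discrete candidate $\vv_h := (I_h u,\Pidiv_h\ssigma,\Pi_h\lambda)\in U_{h0}$ and verify $\vv_h\in K_h$ for both choices $K_h\in\{K_h^s,K_h^0\}$. The nodal constraint $I_h u(x)\geq g(x)$ follows because $u,g\in H^2(\Omega)\subset C^0(\overline\Omega)$ and $u\geq g$ pointwise; when $K_h=K_h^s$ the additional requirement $\Pi_h\lambda\geq 0$ is immediate since $\lambda\geq 0$ a.e.\ (here $\lambda\in L^2$ is implicit from the statement) and $\Pi_h$ is the elementwise average. Using the commutativity $\divergence\Pidiv_h=\Pi_h\divergence$ together with $\divergence\ssigma+\lambda=-f$, the best-approximation term in \cref{thm:VI:b} is controlled exactly as before:
\begin{align*}
  \norm{\uu-\vv_h}U \lesssim \norm{(1-\Pi_h)f}{} + \norm{\ssigma-\Pidiv_h\ssigma}{}+\norm{\nabla(u-I_hu)}{}
  \lesssim h\bigl(\norm{\pwnabla f}{}+\norm{u}{H^2(\Omega)}\bigr).
\end{align*}

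Second, the single cross term appearing in \cref{thm:VI:b} is $|\dual{\lambda}{v_h-u}|=|\dual{\lambda}{I_hu-u}|$. Since only $\lambda\in L^2(\Omega)$ is available (no piecewise-$H^1$ regularity as in \cref{thm:VI:a:conv}), I would bound it by Cauchy--Schwarz and the $L^2$ interpolation estimate:
\begin{align*}
  |\dual{\lambda}{I_hu-u}|\leq \norm{\lambda}{}\norm{I_hu-u}{}\lesssim h^2\norm{\lambda}{}\norm{u}{H^2(\Omega)}.
\end{align*}
This is precisely the point at which the $H^1(\TT)$-regularity of $\lambda$ needed in \cref{thm:VI:a:conv} becomes unnecessary: there is no term of the form $\dual{\mu_h-\lambda}{u-g}$ in \cref{thm:VI:b}, so no $\norm{\pwnabla\lambda}{}$ factor appears.

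Third, the consistency error $\inf_{\vv\in K^0}|\dual{\lambda}{v-u_h}|$ is handled with Falk's trick exactly as in the proof of \cref{thm:VI:a:conv}: take $v:=\sup\{u_h,g\}\in H_0^1(\Omega)$ so that $v\geq g$ and $\vv:=(v,0,0)\in K^0$ (here $K^0$ only requires $v\geq g$, which is even cheaper than enforcing both constraints of $K^s$). Following~\cite{Falk74} one gets $\norm{v-u_h}{}\leq\norm{g-I_hg}{}\lesssim h^2\norm{g}{H^2(\Omega)}$, hence $|\dual{\lambda}{v-u_h}|\lesssim h^2\norm{\lambda}{}\norm{g}{H^2(\Omega)}$. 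Plugging all three bounds into \cref{thm:VI:b}, taking the square root, and absorbing constants yields the claim. No real obstacle arises; the only point requiring care is checking that the chosen $\vv_h$ lies in whichever of the two discrete cones is being used, which is why both cases $K_h=K_h^s$ and $K_h=K_h^0$ can be handled by a single argument.
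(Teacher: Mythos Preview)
Your proposal is correct and follows essentially the same approach as the paper, which explicitly omits the proof with the remark that it ``can be obtained in the same fashion as the previous one.'' Your identification of the key simplification---that the absence of the term $\dual{\mu_h-\lambda}{u-g}$ in \cref{thm:VI:b} is what removes the need for $\lambda\in H^1(\TT)$---is exactly the point the paper highlights in the sentence preceding the theorem.
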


Finally, we show convergence rate for problem~\eqref{eq:VI:c} and its approximation.
Note that for the sets $K_h^1$, $K_h^s$ defined in~\eqref{eq:dCone:c},~\eqref{eq:dCone:a} 
it holds that $K_h^s\subset K_h^1\subset K^1$ and thus the consistency error, see~\cref{thm:VI:c}, vanishes.
Furthermore, note that we do not need additional regularity assumptions on the obstacle $g$.
The proof is similar to the one of~\cref{thm:VI:a:conv} and is therefore left to the reader.

\begin{theorem}\label{thm:VI:c:conv}
  Suppose $\beta\geq 1+\CF^2$. 
  Let $\uu\in K^1$ denote the solution of~\eqref{eq:VI:c} with data $f\in L^2(\Omega)$, $g\in H_0^1(\Omega)$.
  Let $\uu_h \in K_h$ denote the solution
  of~\eqref{eq:VI:abstract} with $A=\blfc$, $F=H_\beta$, and $K=K_h$, where either $K_h=K_h^s$ or $K_h=K_h^1$.
  If $u \in H^2(\Omega)$, $\lambda\in H^1(\TT)$ and $f\in H^1(\TT)$, then
  \begin{align*}
    \norm{\uu-\uu_h}U &\leq \Capp h (\norm{u}{H^2(\Omega)} + \norm{\pwnabla f}{} +\norm{\pwnabla\lambda}{} +
    \norm{g}{H^1(\Omega)}).
  \end{align*}
  The constant $\Capp>0$ depends only on $\beta$, $\Omega$, and $\kappa$-shape regularity of $\TT$.
\end{theorem}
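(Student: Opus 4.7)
The plan is to follow the template of the proof of Theorem~\ref{thm:VI:a:conv}: apply the abstract a priori bound of Theorem~\ref{thm:VI:c}, construct an explicit $\vv_h \in K_h^s \subset K_h^1$ via nodal, Raviart--Thomas, and $L^2$ projections, and dispatch the consistency term by choosing a well-tailored $\vv \in K^1$.

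First, I would handle the consistency infimum $\inf_{\vv \in K^1}|\dual{\mu - \lambda_h}{u-g}|$. Since $\uu_h \in K_h \subset K_h^1$, the discrete multiplier $\lambda_h$ satisfies $\lambda_h \geq 0$, so the choice $\vv = (u, \ssigma, \lambda_h) \in K^1$ is admissible. With this choice $\mu - \lambda_h = 0$, so the consistency term vanishes. This is precisely the remark preceding the theorem: $K_h^s \subset K_h^1 \subset K^1$ makes the formulation~\eqref{eq:VI:c} consistent.

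Next I would bound the discretization term. Set $\vv_h := (I_h u, \Pidiv_h \ssigma, \Pi_h \lambda)$. Since $\lambda \geq 0$ implies $\Pi_h \lambda \geq 0$, and since $u \geq g$ pointwise yields $I_h u(x) = u(x) \geq g(x)$ at every node $x \in \NN$, this $\vv_h$ lies in $K_h^s$ and hence in $K_h^1$. The commutativity $\divergence \Pidiv_h = \Pi_h \divergence$ together with $\divergence \ssigma + \lambda = -f$ gives
\begin{align*}
  \divergence(\ssigma - \Pidiv_h \ssigma) + (\lambda - \Pi_h \lambda) = (1 - \Pi_h)(\divergence \ssigma + \lambda) = -(1-\Pi_h) f,
\end{align*}
so the standard approximation estimates yield $\norm{\uu - \vv_h}U \lesssim h(\norm{u}{H^2(\Omega)} + \norm{\pwnabla f}{})$. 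It remains to estimate $|\dual{\mu_h - \lambda}{u-g}| = |\dual{\Pi_h \lambda - \lambda}{u-g}|$. Using the dual-norm bound $\norm{(1-\Pi_h)\lambda}{-1} \lesssim \norm{h_\TT (1-\Pi_h) \lambda}{} \lesssim h^2 \norm{\pwnabla \lambda}{}$ and pairing against $\nabla(u-g)$ gives
\begin{align*}
  |\dual{\Pi_h \lambda - \lambda}{u-g}| \lesssim h^2 \norm{\pwnabla \lambda}{}(\norm{\nabla u}{} + \norm{\nabla g}{}),
\end{align*}
which is of the required order. Combining all the bounds inside the abstract estimate of Theorem~\ref{thm:VI:c} and taking square roots yields the stated $\OO(h)$ convergence.

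The main obstacle I anticipate is the verification that the interpolant $\vv_h$ genuinely lies in $K_h^s$: one must argue that $\lambda \geq 0$ in $H^{-1}(\Omega)$ combined with the assumption $\lambda \in H^1(\TT) \subset L^2(\Omega)$ indeed translates to a.e.\ pointwise nonnegativity (so that $\Pi_h \lambda \geq 0$), and that $u \geq g$ a.e.\ plus $H^2$-regularity give pointwise nodal nonnegativity $I_h u \geq g$ at nodes; both are standard consequences of Sobolev embeddings in $n=2,3$ with $H^2$-regularity, but deserve a brief mention. Everything else is a routine repetition of the computations already displayed in the proof of Theorem~\ref{thm:VI:a:conv}, which is why the statement indicates the proof is left to the reader.
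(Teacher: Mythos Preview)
Your proposal is correct and matches the paper's intended argument: the paper explicitly notes that $K_h^s\subset K_h^1\subset K^1$ makes the consistency term in Theorem~\ref{thm:VI:c} vanish, and says the remaining discretization estimate follows as in the proof of Theorem~\ref{thm:VI:a:conv}, which is exactly what you carry out with $\vv_h=(I_hu,\Pidiv_h\ssigma,\Pi_h\lambda)$. The only cosmetic difference is that you kill the consistency term by choosing $\vv=(u,\ssigma,\lambda_h)\in K^1$ rather than $\vv=\uu_h\in K^1$, but the mechanism (namely $\lambda_h\geq 0$) is identical.
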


To shortly summarize this section, we have defined and analyzed three different variational inequalities and its
discrete variants. The following table shows which discrete sets can be used for approximating solutions
with~\eqref{eq:VI:a}--\eqref{eq:VI:c} and which assumptions we need for the obstacle so that the formulation is
well-defined.
\begin{table}[h]
\centering
\begin{tabular}{|c|c|c|}
\hline
& Convex set & Obstacle \\ \hline\hline
\eqref{eq:VI:a} & $K_h^s$ & $g\in H_0^1(\Omega)\cap C^0(\overline\Omega)$ \\ \hline
\eqref{eq:VI:b} & $K_h^0$, $K_h^s$ & $g\in H^1(\Omega)\cap C^0(\overline\Omega)$, $g|_\Gamma\leq 0$ \\ \hline
\eqref{eq:VI:c} & $K_h^1$ & $g\in H_0^1(\Omega)$ \\ \hline
\eqref{eq:VI:c} & $K_h^s$ & $g\in H_0^1(\Omega)\cap C^0(\overline\Omega)$ \\ \hline
\end{tabular}
\caption{Overview on which convex sets can be used for the discrete versions of the variational
  inequalities~\eqref{eq:VI:a}--\eqref{eq:VI:c} and corresponding assumptions on the obstacle function.} \label{tab:ass}
\end{table}

\section{A posteriori analysis}\label{sec:apost}
In this section we derive reliable error bounds that can be used as a posteriori estimators.
We define
\begin{align*}
  \oscf &:= \oscf(f) := \norm{(1-\Pi_h)f}{}.
\end{align*}
The estimator below includes the residual term
\begin{align*}
  \eta^2:= \eta(\uu_h,f)^2:= \norm{\divergence\ssigma_h+\lambda_h+\Pi_hf}{}^2 
  + \norm{\nabla u_h-\ssigma_h}{}^2,
\end{align*}
which can be localized.
The derivation of our estimators is quite simple and is based on the following observation.
Let $\uu\in K^s\subset K^j$ denote the unique solution of~\eqref{eq:model:fo} and let $\uu_h\in U_{h0}$ be arbitrary.
Take $\beta = 1+\CF^2$ and 
recall that by~\cref{lem:blf} it holds that $\blfa(\vv,\vv)=\blfb(\vv,\vv)=\blfc(\vv,\vv)\gtrsim\norm{\vv}U^2$ for all
$\vv\in U$.
Then, together with the Pythagoras theorem $\norm{\mu}{}^2 = \norm{(1-\Pi_h)\mu}{}^2 + \norm{\Pi_h\mu}{}^2$ for $\mu\in L^2(\Omega)$
and using $\divergence\ssigma+\lambda+f=0$, $\nabla u = \ssigma$,
$\divergence\ssigma_h+\lambda_h\in \PP^0(\TT)$, it follows that
\begin{align}\label{eq:apost:general}
\begin{split}
  \norm{\uu-\uu_h}U^2 &\lesssim \beta \norm{\divergence\ssigma_h+\lambda_h+f}{}^2 + \norm{\nabla u_h-\ssigma_h}{}^2 
  + \dual{\lambda_h-\lambda}{u_h-u}
  \\
  &= \beta\norm{\divergence\ssigma_h+\lambda_h+\Pi_h f}{}^2 + \beta\oscf^2 + \norm{\nabla u_h-\ssigma_h}{}^2 
  + \dual{\lambda_h-\lambda}{u_h-u}
  \\
  &\leq \beta( \eta^2 + \oscf^2) + \dual{\lambda_h-\lambda}{u_h-u}.
\end{split}
\end{align}

The remaining results in this section are proved by estimating the duality term $\dual{\lambda_h-\lambda}{u_h-u}$
from~\eqref{eq:apost:general}.
In particular, the proof of the next result employs only $\lambda_h\geq 0$ 
We will need the positive resp. negative part of a function $v : \Omega \to\R$,
\begin{align*}
  v_{+} := \max\{0,v\}, \quad v_{-} := -\min\{0,v\}.
\end{align*}
This definition implies that $v = v_+-v_-$.
The ideas of estimating the duality term are similar as in~\cite{GustafssonStenbergVideman_MixedFEMObstacle,Veeser01I}
and references therein, 
see also~\cite{DPGsignorini} for a related estimate for Signorini-type problems.
Note that 
we do not need to assume $g\in H_0^1(\Omega)$.

\begin{theorem}\label{thm:apost:a}
  Let $\uu\in K^s$ denote the solution of~\eqref{eq:model:fo}.
  Let $\uu_h \in K_h$, where $K_h\in\{K_h^s,K_h^1\}$, be arbitrary. The error satisfies
  \begin{align*}
    \norm{\uu-\uu_h}U^2 \leq \Crel \big( \eta^2 + \rho^2 + \oscf^2 \big),
  \end{align*}
  where the estimator contribution $\rho$ is given by
  \begin{align*}
    \rho^2 := \dual{\lambda_h}{(u_h-g)_+} + \norm{\nabla(g-u_h)_+}{}^2.
  \end{align*}
  The constant $\Crel>0$ depends only on $\Omega$.
\end{theorem}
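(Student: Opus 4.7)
The plan is to start from the preparatory inequality~\eqref{eq:apost:general} with $\beta=1+\CF^2$, which already delivers
\begin{align*}
  \norm{\uu-\uu_h}U^2 \lesssim \eta^2 + \oscf^2 + \dual{\lambda_h-\lambda}{u_h-u},
\end{align*}
so the whole task reduces to controlling the residual duality term by $\rho^2$ (modulo an absorbable multiple of $\norm{\uu-\uu_h}U^2$). The first step is sign bookkeeping. Writing $u_h-u=(u_h-g)-(u-g)$ and $u_h-g=(u_h-g)_+-(g-u_h)_+$ and expanding, I would use the complementarity $\dual{\lambda}{u-g}=0$ together with the non-negativity of $\lambda,\lambda_h,u-g,(u_h-g)_\pm$ to drop the two non-positive pieces $-\ip{\lambda_h}{u-g}$ and $-\dual{\lambda}{(u_h-g)_+}$, arriving at
\begin{align*}
  \dual{\lambda_h-\lambda}{u_h-u} \leq \ip{\lambda_h}{(u_h-g)_+} + \Big(\dual{\lambda}v - \ip{\lambda_h}v\Big),\qquad v:=(g-u_h)_+.
\end{align*}
The first summand is one half of $\rho^2$, and $v\in H_0^1(\Omega)$ because $u_h|_\Gamma=0$ and $g|_\Gamma\leq 0$, which legitimizes integrating by parts against $v$ in what follows.

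The second step turns the bracket into a pure residual. Using $\lambda=-\divergence\ssigma-f$ and integration by parts gives $\dual{\lambda}v = \ip{\ssigma}{\nabla v}-\ip{f}v$; on the discrete side, inserting $\pm\Pi_h f$ and integrating $\divergence\ssigma_h$ by parts yields $\ip{\lambda_h}v = \ip{\ssigma_h}{\nabla v}-\ip{\Pi_h f}v+\ip{\divergence\ssigma_h+\lambda_h+\Pi_h f}v$. Subtracting produces
\begin{align*}
  \dual{\lambda}v - \ip{\lambda_h}v = \ip{\ssigma-\ssigma_h}{\nabla v} - \ip{\divergence\ssigma_h+\lambda_h+\Pi_h f}v - \ip{f-\Pi_h f}v.
\end{align*}
Cauchy--Schwarz, Friedrichs' inequality $\norm v{}\leq \CF\norm{\nabla v}{}$, and the definitions of $\eta$ and $\oscf$ bound this by $(\norm{\uu-\uu_h}U+\CF(\eta+\oscf))\norm{\nabla v}{}$. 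A Young inequality with a sufficiently small parameter then splits it into $\varepsilon\norm{\uu-\uu_h}U^2$ (absorbable), a multiple of $\eta^2+\oscf^2$, and a multiple of $\norm{\nabla v}{}^2 = \norm{\nabla(g-u_h)_+}{}^2$, which together with $\ip{\lambda_h}{(u_h-g)_+}$ is precisely $\rho^2$. Substituting back into~\eqref{eq:apost:general} and absorbing the small $\norm{\uu-\uu_h}U^2$ term on the left-hand side concludes.

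The main obstacle is the ``wrong-sign'' quantity $\ip{\lambda_h}{(g-u_h)_+}$ produced by the sign bookkeeping: it is \emph{not} contained in $\rho^2$ (the estimator carries only its opposite-side partner $\ip{\lambda_h}{(u_h-g)_+}$), and bounding it individually would require an uncomputable constant like $\norm{\lambda}{-1}$. The saving move is to refuse to drop it in Step~1 and instead keep it paired with $\dual{\lambda}{(g-u_h)_+}$; because both act on the \emph{same} test function $v\in H_0^1(\Omega)$, the first-order reformulation together with a single integration by parts collapses their difference into a clean $\eta$-, $\oscf$- and $\norm{\uu-\uu_h}U$-residual. Without this cancellation the ``bad'' $\lambda_h$-term would remain visible and no reliable bound in terms of $\rho^2$ would be possible.
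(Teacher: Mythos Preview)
Your argument is correct and lands on the same intermediate inequality as the paper, namely
\[
  \dual{\lambda_h-\lambda}{u_h-u}\le \ip{\lambda_h}{(u_h-g)_+}+\dual{\lambda-\lambda_h}{(g-u_h)_+},
\]
but the way you dispatch the second summand differs from the paper's proof. The paper simply applies the $H^{-1}$--$H_0^1$ duality, bounds $\dual{\lambda-\lambda_h}{(g-u_h)_+}$ by $\norm{\lambda-\lambda_h}{-1}\norm{\nabla(g-u_h)_+}{}$, uses Young, and absorbs $\norm{\lambda-\lambda_h}{-1}^2\lesssim\norm{\uu-\uu_h}U^2$. You instead integrate by parts to rewrite the same quantity as $\ip{\ssigma-\ssigma_h}{\nabla v}$ plus explicit $\eta$- and $\oscf$-residuals, and then absorb $\norm{\ssigma-\ssigma_h}{}^2$. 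Both work; the paper's route is shorter and does not generate the auxiliary $\eta^2+\oscf^2$ terms, while yours makes the residual structure more visible.

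One caveat on the sign bookkeeping: you split $\dual{\lambda}{u_h-u}$ into $\dual{\lambda}{u_h-g}-\dual{\lambda}{u-g}$ and then discard $\dual{\lambda}{(u_h-g)_+}\ge 0$. These pairings are only defined when $u-g,(u_h-g)_+\in H_0^1(\Omega)$, i.e.\ when $g|_\Gamma=0$. The paper explicitly allows $g|_\Gamma\le 0$, and avoids the issue by pairing $\lambda$ only against $v_h-u$ with $v_h:=\max\{u_h,g\}\in H_0^1(\Omega)$ and invoking the variational inequality $\dual{\lambda}{v_h-u}\ge 0$. Your computation is easily repaired the same way; the resulting inequality is identical to yours.
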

\begin{proof}
  In view of estimate~\eqref{eq:apost:general} we only have to tackle the term $\dual{\lambda_h-\lambda}{u_h-u}$.
  Define $v_h := \max\{u_h,g\}$. Clearly, $v_h\geq g$ and $v_h\in H_0^1(\Omega)$. Note that $\lambda = -\Delta u - f \in H^{-1}(\Omega)$.
  Therefore, $\dual{\lambda}v = \ip{\nabla u}{\nabla v}-\ip{f}v$ for all $v\in H_0^1(\Omega)$ and using the variational
  inequality for the exact solution~\eqref{eq:model:VI} yields
  \begin{align*}
    -\dual{\lambda}{u_h-u} &= -\dual{\lambda}{u_h-v_h}-\dual{\lambda}{v_h-u} \leq -\dual{\lambda}{u_h-v_h} \\
    &= \dual{\lambda}{(u_h-g)_-} = \dual{\lambda-\lambda_h}{(u_h-g)_-} + \dual{\lambda_h}{(u_h-g)_-}
    \\
    &\leq \frac\delta2\norm{\lambda-\lambda_h}{-1}^2 + \frac{\delta^{-1}}2\norm{\nabla(u_h-g)_-}{}^2 +
    \dual{\lambda_h}{(u_h-g)_-}
  \end{align*}
  for all $\delta>0$.
  Employing $\lambda_h\geq 0$, $g-u\leq 0$, and $v+v_-=v_+$ we further infer that
  \begin{align*}
    \dual{\lambda_h-\lambda}{u_h-u} &\leq \dual{\lambda_h}{u_h-g+(u_h-g)_-} + \dual{\lambda_h}{g-u} 
    \\ &\qquad + \frac\delta2\norm{\lambda-\lambda_h}{-1}^2 + \frac{\delta^{-1}}2\norm{\nabla(u_h-g)_-}{}^2
    \\
    &\leq \dual{\lambda_h}{(u_h-g)_+} + \frac\delta2\norm{\lambda-\lambda_h}{-1}^2 +
    \frac{\delta^{-1}}2\norm{\nabla(u_h-g)_-}{}^2.
  \end{align*}
  Recall that $\norm{\lambda-\lambda_h}{-1}\leq \norm{\uu-\uu_h}V\lesssim \norm{\uu-\uu_h}U$, where the involved
  constant depends only on $\Omega$.
  Thus, choosing $\delta>0$ sufficiently small the proof is concluded with~\eqref{eq:apost:general}.
\end{proof}

We could derive a similar estimate if $\uu_h\in K_h^0$ by changing the role of $u_h$ and $\lambda_h$ resp. $u$ and
$\lambda$ in the proof. 
However, this leads to an estimator with a non-local term.
To see this, suppose $g=0$. Then, following the last proof we get
\begin{align*}
  \dual{\lambda_h-\lambda}{u_h-u} \leq \dual{(\lambda_h)_+}{u_h} + \frac\delta2 \norm{\nabla(u-u_h)}{}^2 
  + \frac{\delta^{-1}}2 \norm{(\lambda_h)_{-}}{-1}^2
\end{align*}
for $\delta>0$. For the total error this would yield
\begin{align*}
  \norm{\uu-\uu_h}U^2 \lesssim \eta^2 + \oscf^2 + \dual{(\lambda_h)_+}{u_h} + \norm{(\lambda_h)_{-}}{-1}^2.
\end{align*}
The last term is not localizable and therefore it is not feasible to use this estimate as an a posteriori error
estimator in an adaptive algorithm. 

\begin{remark}
  The derived estimator is efficient up to the term $\rho$, i.e.,
  \begin{align*}
    \eta^2 + \oscf^2 \lesssim \norm{\uu-\uu_h}U^2.
  \end{align*}
  To see this, we employ the Pythagoras theorem to obtain 
  \begin{align*}
    \eta^2+\oscf^2 = \norm{\divergence\ssigma_h+\lambda_h+f}{}^2 + \norm{\nabla u_h-\ssigma_h}{}^2.
  \end{align*}
  Then, $\divergence\ssigma+\lambda=-f$, $\nabla u = \ssigma$ and the triangle inequality prove the asserted
  estimate.
  The proof of the efficiency
  estimate $\rho\lesssim \norm{\uu-\uu_h}U$ (up to possible data resp. obstacle oscillations) is an open problem.
\end{remark}

\section{Examples}
\label{sec:examples}

In this section we present numerical studies that demonstrate the performance of our proposed methods in different situations:
\begin{itemize}
  \item In~\cref{ex:smooth} we consider a problem on the unit square with smooth obstacle and known smooth solution.
  \item In~\cref{ex:Lshape} we consider the example from~\cite[Section~5.2]{CarstensenBartels04} where the solution 
    is known and exhibits a singularity.
  \item In~\cref{ex:pyramid} we consider a problem on an L-shaped domain with a pyramid-like obstacle and unknown
    solution. 
\end{itemize}
Before we come to a detailed discussion on the numerical studies some remarks are in order.
In all examples we choose $\beta = 1+\diam(\Omega)^2$ to ensure coercivity of the bilinear forms (\cref{lem:blf}).
This also implies that the Galerkin matrices associated to the bilinear forms $\blfa$, $\blfb$, and $\blfc$ are positive
definite.
Choosing standard basis functions for $\cS_0^1(\TT)$ (nodal basis), $\RT^0(\TT)$ (lowest-order Raviart-Thomas basis) and
$\PP^0(\TT)$ (characteristic functions), the constraints in the discrete
convex sets $K_h^\star$ are straightforward to impose.
The resulting discrete variational inequalities are then solved using a (primal-dual) active set strategy, 
see, e.g.,~\cite{MR1276702,KarkkainenKT_03_ALA}.

We define the error resp. total estimator by
\begin{align*}
  \err_U := \norm{\uu-\uu_h}U, \quad \est^2 := \eta^2 + \rho^2 + \oscf^2.
\end{align*}
Note that the estimator can be decomposed into local contributions,
\begin{align*}
  \est^2 = \sum_{T\in\TT} \est(T)^2 &=: \sum_{T\in\TT} \Big( \norm{\divergence\ssigma_h+\lambda_h+\Pi_h f}T^2
  + \norm{\nabla u_h-\ssigma_h}T^2 \\
  &\qquad\qquad+ \ip{\lambda_h}{(u_h-g)_+}_T + \norm{\nabla(g-u_h)_+}T^2
+ \norm{(1-\Pi_h)f}T^2 \Big),
\end{align*}
where $\norm\cdot{T}$ denotes the $L^2(T)$ norm and $\ip\cdot\cdot_T$ the $L^2(T)$ inner product.
Moreover, we will estimate the error in the weaker norm $\norm\cdot{V}$.
To do so we consider an upper bound given by
\begin{align*}
  \err_V^2 := \err_V(\uu)^2 := \norm{\nabla(u-u_h)}{}^2 + \norm{\ssigma-\ssigma_h}{}^2 
  + \norm{\lambda-\lambda_h}{-1,h}^2,
\end{align*}
where the evaluation of $\norm{\cdot}{-1,h}$ is based on the discrete $H^{-1}(\Omega)$ norm 
discussed in the seminal work~\cite{BPLdiscrete}:
Let $Q_h : L^2(\Omega)\to \cS_0^1(\TT)$ denote the $L^2(\Omega)$ projector.
Let $\mu\in L^2(\Omega)$.
We stress that using the projection and local approximation property of $Q_h$ yields
\begin{align*}
  \norm{(1-Q_h)\mu}{-1} = \sup_{0\neq v\in H_0^1(\Omega)} \frac{\dual{(1-Q_h)\mu}{(1-Q_h)v}}{\norm{\nabla v}{}}
  \lesssim \norm{h_\TT\mu}{},
\end{align*}
where the involved constant depends on shape regularity of $\TT$.
Following~\cite{BPLdiscrete} it holds that
\begin{align*}
  \norm{\mu}{-1} \leq \norm{(1-Q_h)\mu}{-1} + \norm{Q_h\mu}{-1} \lesssim 
  \norm{h_\cS \mu}{} + \norm{\nabla u_h[\mu]}{}
\end{align*}
where $u_h[\mu]\in \cS_0^1(\TT)$ is the solution of 
\begin{align*}
  \ip{\nabla u_h[\mu]}{\nabla v_h} = \dual{\mu}{v_h} \quad\text{for all }v_h\in \cS_0^1(\TT).
\end{align*}
Note that $\norm{\nabla u_h[\mu]}{}\leq \norm{\mu}{-1}$.
The estimate $\norm{Q_h\mu}{-1}\lesssim \norm{\nabla u_h[\mu]}{}$ 
depends on the stability of the projection $Q_h$ in $H^1(\Omega)$, $\norm{\nabla Q_h
v}{} \lesssim \norm{\nabla v}{}$ for $v\in H_0^1(\Omega)$, i.e.,
\begin{align*}
  \norm{Q_h\mu}{-1} &= \sup_{0\neq v\in H_0^1(\Omega)} \frac{\dual{Q_h \mu}v}{\norm{\nabla v}{}}
  = \sup_{0\neq v\in H_0^1(\Omega)} \frac{\dual{\mu}{Q_hv}}{\norm{\nabla v}{}}
  = \sup_{0\neq v\in H_0^1(\Omega)} \frac{\ip{\nabla u_h[\mu]}{\nabla Q_hv}}{\norm{\nabla v}{}}
  \\
  &\lesssim \sup_{0\neq v\in H_0^1(\Omega)} \frac{\ip{\nabla u_h[\mu]}{\nabla Q_h v}}{\norm{\nabla Q_h v}{}}
  = \norm{\nabla u_h[\mu]}{}.
\end{align*}
Here, we use newest-vertex bisection~\cite{stevenson:NVB} as refinement strategy where stability of the 
$L^2(\Omega)$ projection is known~\cite{Karkulik2013}.

We use an adaptive algorithm that basically consists of iterating the four steps
\begin{align*}
  \boxed{\emph{SOLVE}} \to \boxed{\emph{ESTIMATE}} \to \boxed{\emph{MARK}} 
  \to \boxed{\emph{REFINE}},
\end{align*}
where the marking step is done with the bulk criterion, i.e., we determine a set $\mathcal{M}\subseteq\TT$ of
(up to a constant) minimal cardinality with
\begin{align*}
  \theta \est^2 \leq \sum_{T\in\mathcal{M}} \est(T)^2.
\end{align*}
For the experiments the marking parameter $\theta$ is set to $\tfrac14$.

Convergence rates in the figures are indicated by triangles, where the number $\alpha$ besides the triangle
denotes the experimental rate $\OO( (\#\TT)^{-\alpha})$. 
For uniform refinement we have $h^{2\alpha} \simeq \#\TT^{-\alpha}$.

\subsection{Smooth solution}\label{ex:smooth}
\begin{figure}
  \begin{center}
    \begin{tikzpicture}
\begin{loglogaxis}[
    title={\eqref{eq:VI:a} with $K_h^s$},
width=0.49\textwidth,
%cycle list name=black white,
%cycle list name=exotic,
cycle list/Dark2-6,
% combine it with ’mark list*’:
cycle multiindex* list={
mark list*\nextlist
Dark2-6\nextlist},
every axis plot/.append style={ultra thick},
xlabel={number of elements $\#\TT$},
grid=major,
legend entries={\tiny $\err_U$,\tiny $\err_V$},
legend pos=south west,
]
\addplot table [x=nE,y=errNormU] {data/Example1Symm.dat};
\addplot table [x=nE,y=errNormV] {data/Example1Symm.dat};

\logLogSlopeTriangle{0.9}{0.2}{0.2}{0.5}{black}{{\tiny $\tfrac12$}};
\end{loglogaxis}
\end{tikzpicture}
\begin{tikzpicture}
\begin{loglogaxis}[
    title={\eqref{eq:VI:b} with $K_h^0$},
width=0.49\textwidth,
%cycle list name=black white,
%cycle list name=exotic,
cycle list/Dark2-6,
% combine it with ’mark list*’:
cycle multiindex* list={
mark list*\nextlist
Dark2-6\nextlist},
every axis plot/.append style={ultra thick},
xlabel={number of elements $\#\TT$},
grid=major,
legend entries={\tiny $\err_U$,\tiny $\err_V$},
legend pos=south west,
]
\addplot table [x=nE,y=errNormU] {data/Example1NonSymm0.dat};
\addplot table [x=nE,y=errNormV] {data/Example1NonSymm0.dat};

\logLogSlopeTriangle{0.9}{0.2}{0.2}{0.5}{black}{{\tiny $\tfrac12$}};
\end{loglogaxis}
\end{tikzpicture}
\begin{tikzpicture}
\begin{loglogaxis}[
    title={\eqref{eq:VI:c} with $K_h^1$},
width=0.49\textwidth,
%cycle list name=black white,
%cycle list name=exotic,
cycle list/Dark2-6,
% combine it with ’mark list*’:
cycle multiindex* list={
mark list*\nextlist
Dark2-6\nextlist},
every axis plot/.append style={ultra thick},
xlabel={number of elements $\#\TT$},
grid=major,
legend entries={\tiny $\err_U$,\tiny $\err_V$},
legend pos=south west,
]
\addplot table [x=nE,y=errNormU] {data/Example1NonSymm1.dat};
\addplot table [x=nE,y=errNormV] {data/Example1NonSymm1.dat};

\logLogSlopeTriangle{0.9}{0.2}{0.2}{0.5}{black}{{\tiny $\tfrac12$}};
\end{loglogaxis}
\end{tikzpicture}
  \end{center}
  \caption{Convergence rates for the problem from~\cref{ex:smooth}.}
  \label{fig:Example1}
\end{figure}
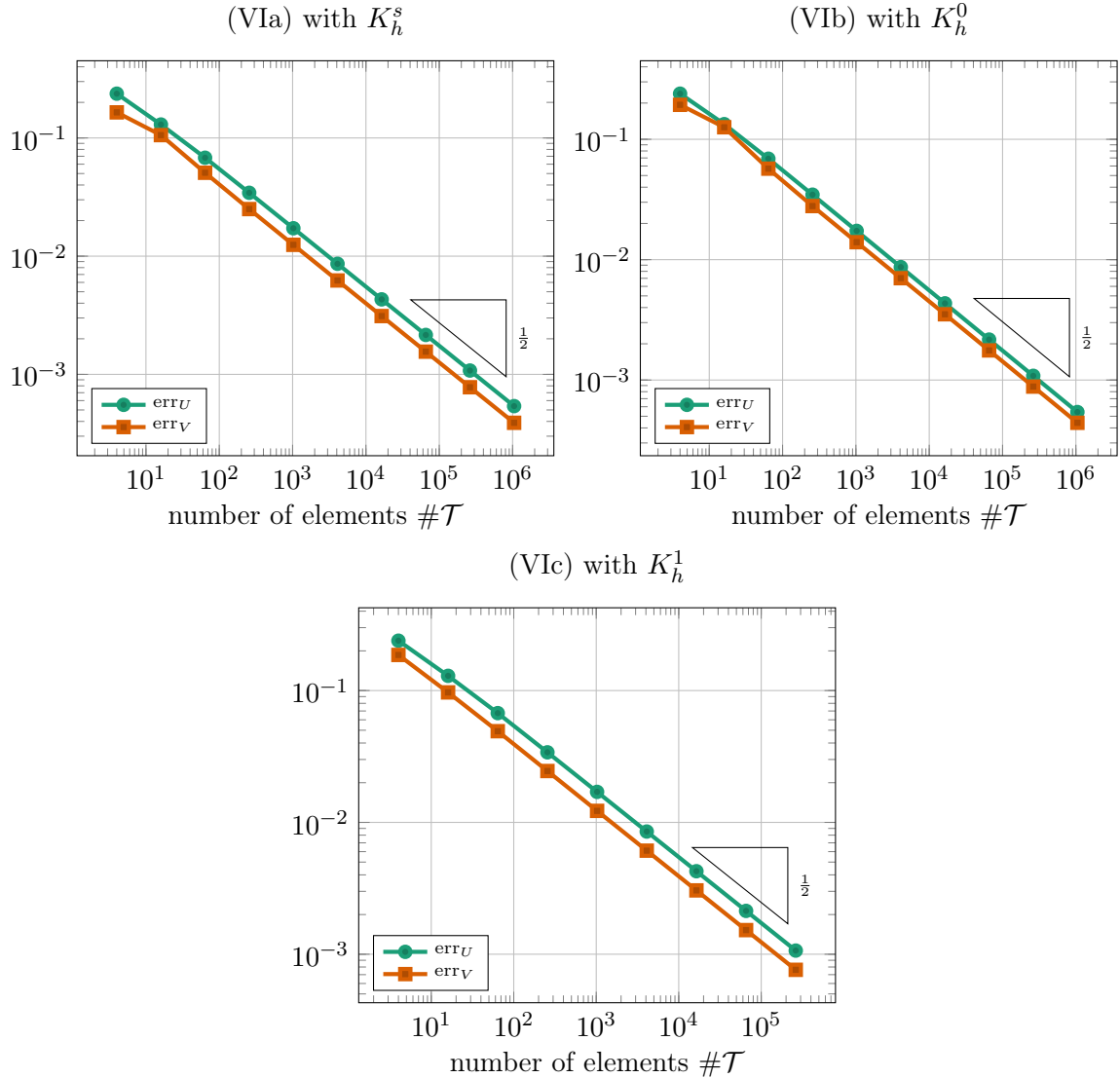

Let $\Omega = (0,1)^2$, $u(x,y) = (1-x)x(1-y)y$,
\begin{align*}
  f(x,y) := \begin{cases}
    0 & x<\tfrac12 \\
    -\Delta u (x,y) & x\geq\tfrac12
  \end{cases}.
\end{align*}
Then, $u$ solves the obstacle problem~\eqref{eq:model} with data $f$ and obstacle
\begin{align*}
  g(x,y) = \begin{cases}
    (1-x)x(1-y)y & x\leq \tfrac12 \\
    \widetilde g(x) (1-y)y & x\in (\tfrac12,\tfrac34) \\
    0 & x\geq \tfrac34
  \end{cases},
\end{align*}
where $\widetilde g$ is the unique polynomial of degree $3$ such that $g$ and $\nabla g$ are continuous at the lines
$x=\tfrac12,\tfrac34$. In particular, $g\in H^2(\Omega)$.
Note that $\lambda = -\Delta u-f \in H^1(\TT)$.
\Cref{fig:Example1} shows that the convergence rates for the solutions of the discrete variational
inequalities~\eqref{eq:VI:a}--\eqref{eq:VI:c} based on the convex sets $K_h^s$, $K_h^0$, $K_h^1$ are optimal.
This perfectly fits to our theoretic considerations in~\crefrange{thm:VI:a:conv}{thm:VI:c:conv}.
Additionally, we plot $\err_V$ which is in all cases slightly smaller than $\err_U$ but of the same order.
Note that since $\lambda$ is a $\TT$-elementwise polynomial, an inverse inequality shows that
$h\norm{\lambda-\lambda_h}{} \lesssim \norm{\lambda-\lambda_h}{-1}$ and thus $\err_V$ is equivalent to
$\norm{\uu-\uu_h}V$.

\subsection{Manufactured solution on L-shaped domain}\label{ex:Lshape}

\begin{figure}
  \begin{center}
    \begin{tikzpicture}
\begin{loglogaxis}[
    title={errors and estimator},
width=0.65\textwidth,
%cycle list name=black white,
%cycle list name=exotic,
cycle list/Dark2-6,
% combine it with ’mark list*’:
cycle multiindex* list={
mark list*\nextlist
Dark2-6\nextlist},
every axis plot/.append style={ultra thick},
xlabel={number of elements $\#\TT$},
grid=major,
legend entries={\small $\err_U$ adap.,\small $\err_V$ adap., \small $\est$ adap., \small $\err_U$ unif.,\small $\err_V$
  unif., \small $\est$
unif.},
legend pos=south west,
]
\addplot table [x=nE,y=errNormU] {data/Example2Symm.dat};
\addplot table [x=nE,y=errNormV] {data/Example2Symm.dat};
\addplot table [only marks,x=nE,y=est] {data/Example2Symm.dat};
\addplot table [x=nE,y=errNormU] {data/Example2SymmUnif.dat};
\addplot table [x=nE,y=errNormV] {data/Example2SymmUnif.dat};
\addplot table [only marks,x=nE,y=est] {data/Example2SymmUnif.dat};

\logLogSlopeTriangle{0.9}{0.2}{0.5}{0.45}{black}{{\tiny $0.45$}};
%\logLogSlopeTriangleBelow{0.8}{0.2}{0.1}{0.5}{black}{{\tiny $\tfrac12$}};
\end{loglogaxis}
\end{tikzpicture}
\begin{tikzpicture}
\begin{loglogaxis}[
    title={estimator and error contributions},
width=0.65\textwidth,
%cycle list name=black white,
%cycle list name=exotic,
cycle list/Dark2-6,
% combine it with ’mark list*’:
cycle multiindex* list={
mark list*\nextlist
Dark2-6\nextlist},
every axis plot/.append style={ultra thick},
xlabel={number of elements $\#\TT$},
grid=major,
legend entries={\small $\eta$,\small $\rho$, \small $\oscf$, \small $\norm{\nabla(u-u_h)}{}$,\small
$\norm{\ssigma-\ssigma_h}{}$, \small $\norm{\divergence\ssigma_h+\lambda_h+f}{}$},
legend pos=south west,
]
\addplot table [x=nE,y=eta] {data/Example2Symm.dat};
\addplot table [x=nE,y=estContact] {data/Example2Symm.dat};
\addplot table [x=nE,y=oscF] {data/Example2Symm.dat};
\addplot table [x=nE,y=errU] {data/Example2Symm.dat};
\addplot table [x=nE,y=errSigma] {data/Example2Symm.dat};
\addplot table [only marks,x=nE,y=errDivSigmaLambda] {data/Example2Symm.dat};

\logLogSlopeTriangle{0.9}{0.2}{0.7}{0.5}{black}{{\tiny $\tfrac12$}};
%\logLogSlopeTriangle{0.9}{0.2}{0.2}{1}{black}{{\tiny $1$}};
\end{loglogaxis}
\end{tikzpicture}
  \end{center}
  \caption{Convergence rates for the problem from~\cref{ex:Lshape}. The upper plot shows the total errors and estimators
for uniform and adaptive refinement. The lower plot compares the error and estimator contributions in the case of
adaptive refinements.}
  \label{fig:Example2Symm}
\end{figure}

We consider the same problem as given in~\cite[Section~5.2]{CarstensenBartels04}, where $g=0$, $\Omega =
(-2,2)^2\setminus [0,2]^2$ and 
\begin{align*}
  f(r,\varphi) := -r^{2/3} \sin(2/3\varphi)(\gamma'(r)/r + \gamma''(r)) - 4/3 r^{-1/3} \gamma'(r)
  \sin(2/3\varphi)-\delta(r),
\end{align*}
where $(r,\varphi)$ denote polar coordinates and $\gamma,\delta$ are given by
\begin{align*}
  \gamma(r) := \begin{cases}
    1 & r_* < 0, \\
    -6r_*^5 + 15r_*^4 -10r_*^3+1 & 0\leq r_* < 1, \\
    0 & 1\leq r_*,
  \end{cases}
\end{align*}
$r_* = 2(r-1/4)$, and
\begin{align*}
  \delta(r) := \begin{cases}
    0 & r \leq 5/4, \\
    1 & r > 5/4.
  \end{cases}
\end{align*}
The exact solution then reads $u(r,\varphi) = r^{2/3}\sin(2/3\varphi)\gamma(r)$.
Note that $u$ has a generic singularity at the reentrant corner.
We consider the discrete version of~\eqref{eq:VI:a}, where solutions are sought in the 
convex set $K_h^s$.
We conducted various tests with $\beta$ between $1$ and $100$ and the results were in all cases comparable.
For the results displayed here we have used $\beta=3$.
\Cref{fig:Example2Symm} displays convergence rates in the case of uniform and adaptive mesh-refinement.
We note that in the first plot the lines for $\err_U$ and $\est$ are almost identical.
In the second plot we compare the contributions of the overall error and estimator in the adaptive case. 
The lines for $\oscf$ and $\norm{\divergence\ssigma_h+\lambda_h+f}{}$ are almost identical.
This means that the estimator contribution $\norm{\divergence\ssigma_h+\lambda_h+\Pi_hf}{}$ in $\eta$ is negligible
and $\oscf$ is dominating the overall estimator.
We observe from the first plot that $\err_V$ is much smaller than $\err_U$ but has the same rate of convergence.
In the uniform case we see that the errors and estimators approximately converge at rate $0.45$.
One would expect a smaller rate due to the singularity. However, in this example the solution has a large gradient so
that the algorithm first refines the regions where the gradient resp. $f$ is large. 
This preasymptotic behavior was also observed in~\cite[Section~5.2]{CarstensenBartels04}.
Nevertheless, adaptivity yields a significant error reduction.

\subsection{Unknown solution}\label{ex:pyramid}

\begin{figure}
  \begin{center}
    \begin{tikzpicture}
\begin{loglogaxis}[
    title={estimators},
width=0.49\textwidth,
%cycle list name=black white,
cycle list name=exotic,
% cycle list/Dark2-6,
% % combine it with ’mark list*’:
% cycle multiindex* list={
% mark list*\nextlist
% Dark2-6\nextlist},
every axis plot/.append style={ultra thick},
xlabel={number of elements $\#\TT$},
grid=major,
legend entries={\tiny $\est$ adap., \tiny $\est$ unif.},
legend pos=south west,
]
\addplot table [x=nE,y=est] {data/Example3.dat};
\addplot table [x=nE,y=est] {data/Example3Unif.dat};

\logLogSlopeTriangle{0.9}{0.2}{0.28}{0.33}{black}{{\tiny $\tfrac13$}};
\logLogSlopeTriangleBelow{0.75}{0.2}{0.08}{0.5}{black}{{\tiny $\tfrac12$}};
\end{loglogaxis}
\end{tikzpicture}
\begin{tikzpicture}
\begin{loglogaxis}[
    title={contributions},
width=0.49\textwidth,
%cycle list name=black white,
%cycle list name=exotic,
cycle list/Dark2-6,
% combine it with ’mark list*’:
cycle multiindex* list={
mark list*\nextlist
Dark2-6\nextlist},
every axis plot/.append style={ultra thick},
xlabel={number of elements $\#\TT$},
grid=major,
legend entries={\tiny $\eta$ adap.,\tiny $\rho$ adap.,\tiny $\eta$ unif.,\tiny $\rho$ unif.},
legend pos=south west,
]
\addplot table [x=nE,y=eta] {data/Example3.dat};
\addplot table [x=nE,y=estContact] {data/Example3.dat};
\addplot table [x=nE,y=eta] {data/Example3Unif.dat};
\addplot table [x=nE,y=estContact] {data/Example3Unif.dat};

\logLogSlopeTriangle{0.9}{0.2}{0.45}{0.33}{black}{{\tiny $\tfrac13$}};
\logLogSlopeTriangleBelow{0.75}{0.2}{0.1}{0.5}{black}{{\tiny $\tfrac12$}};
\end{loglogaxis}
\end{tikzpicture}
  \end{center}
  \caption{Experimental convergence rates for the problem from~\cref{ex:pyramid}.}
  \label{fig:Example3}
\end{figure}
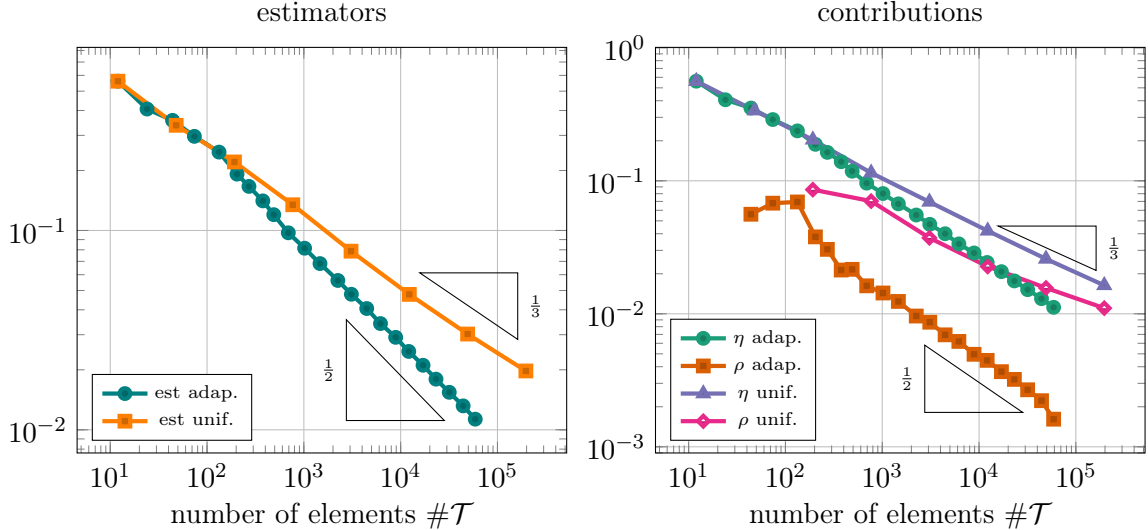

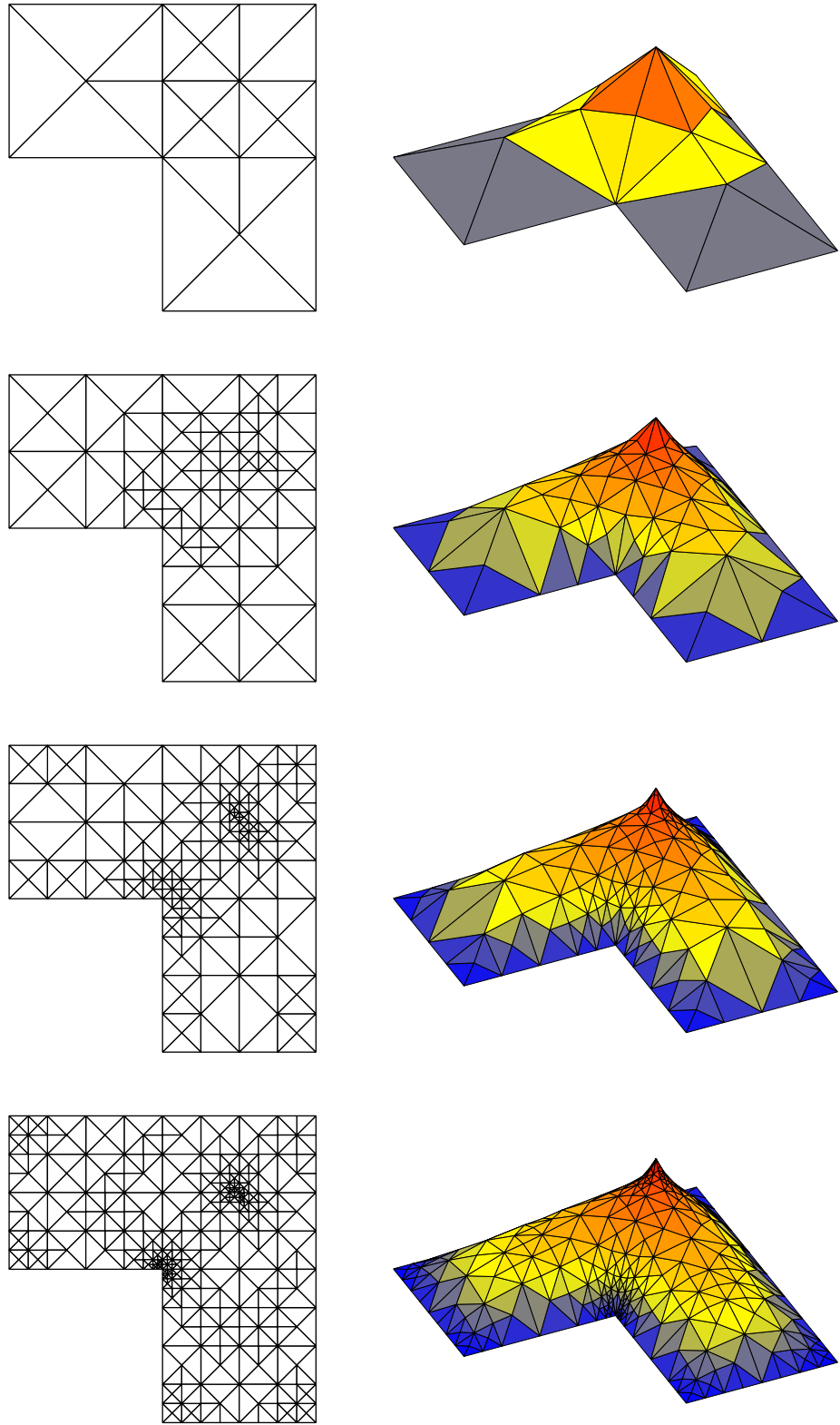
\begin{figure}
  \begin{center}
    \begin{tikzpicture}
\begin{axis}[hide axis,
width=0.49\textwidth,
    axis equal,
%    width=0.49\textwidth,
%    xlabel={$x$},
%    ylabel={$y$},
]%, axis equal image = true]

\addplot[patch,color=white,%mesh, 
faceted color = black, line width = 0.5pt,
patch table ={data/ele1.dat}] file{data/coo1.dat};
\end{axis}
\end{tikzpicture}
\begin{tikzpicture}
  \begin{axis}[hide axis,
width=0.49\textwidth,
view={-25}{60},
%    point meta min=0.5, point meta max=1.5,
%    title={$\eps=10^{-7}$},
%    colorbar,
  %  colormap={blackwhite}{gray(0cm)=(0.75); gray(1cm)=(0)},
]
\addplot3[patch,line width=0.2pt,faceted color=black] table{data/sol1.dat};
\end{axis}
\end{tikzpicture}
\begin{tikzpicture}
\begin{axis}[hide axis,
width=0.49\textwidth,
    axis equal,
%    width=0.49\textwidth,
%    xlabel={$x$},
%    ylabel={$y$},
]%, axis equal image = true]

\addplot[patch,color=white,%mesh, 
faceted color = black, line width = 0.5pt,
patch table ={data/ele2.dat}] file{data/coo2.dat};
\end{axis}
\end{tikzpicture}
\begin{tikzpicture}
  \begin{axis}[hide axis,
width=0.49\textwidth,
view={-25}{60},
%    point meta min=0.5, point meta max=1.5,
%    title={$\eps=10^{-7}$},
%    colorbar,
  %  colormap={blackwhite}{gray(0cm)=(0.75); gray(1cm)=(0)},
]
\addplot3[patch,line width=0.2pt,faceted color=black] table{data/sol2.dat};
\end{axis}
\end{tikzpicture}
\begin{tikzpicture}
\begin{axis}[hide axis,
width=0.49\textwidth,
    axis equal,
%    width=0.49\textwidth,
%    xlabel={$x$},
%    ylabel={$y$},
]%, axis equal image = true]

\addplot[patch,color=white,%mesh, 
faceted color = black, line width = 0.5pt,
patch table ={data/ele3.dat}] file{data/coo3.dat};
\end{axis}
\end{tikzpicture}
\begin{tikzpicture}
  \begin{axis}[hide axis,
width=0.49\textwidth,
view={-25}{60},
%    point meta min=0.5, point meta max=1.5,
%    title={$\eps=10^{-7}$},
%    colorbar,
  %  colormap={blackwhite}{gray(0cm)=(0.75); gray(1cm)=(0)},
]
\addplot3[patch,line width=0.2pt,faceted color=black] table{data/sol3.dat};
\end{axis}
\end{tikzpicture}
\begin{tikzpicture}
\begin{axis}[hide axis,
width=0.49\textwidth,
    axis equal,
%    width=0.49\textwidth,
%    xlabel={$x$},
%    ylabel={$y$},
]%, axis equal image = true]

\addplot[patch,color=white,%mesh, 
faceted color = black, line width = 0.5pt,
patch table ={data/ele4.dat}] file{data/coo4.dat};
\end{axis}
\end{tikzpicture}
\begin{tikzpicture}
  \begin{axis}[hide axis,
width=0.49\textwidth,
view={-25}{60},
%    point meta min=0.5, point meta max=1.5,
%    title={$\eps=10^{-7}$},
%    colorbar,
  %  colormap={blackwhite}{gray(0cm)=(0.75); gray(1cm)=(0)},
]
\addplot3[patch,line width=0.2pt,faceted color=black] table{data/sol4.dat};
\end{axis}
\end{tikzpicture}
  \end{center}
  \caption{Adaptively refined meshes and corresponding solution component $u_h$ for the problem from~\cref{ex:pyramid}.}
  \label{fig:Example3:meshSol}
\end{figure}

For our final experiment, we choose $\Omega = (-1,1)^2 \setminus [-1,0]^2$, $f=1$, and the pyramid-like obstacle
$g(x) = \max\{0,\dist(x,\partial\Omega_u)-\tfrac14\}$, where $\Omega_u = (0,1)^2$.
The solution in this case is unknown. We solve the discrete version of~\eqref{eq:VI:a} with convex set $K_h^s$.
Since $f$ is constant we have $\oscf = 0$.
\Cref{fig:Example3} shows the overall estimator (left) and its contributions (right). We observe that uniform refinement
leads to the reduced rate $\tfrac13$, whereas for adaptive refinement we recover the optimal rate.
Heuristically, we expect the solution to have a singularity at the reentrant corner as well as in the contact regions.
This would explain the reduced rates. 
\Cref{fig:Example3:meshSol} visualizes meshes produced by the adaptive algorithm and corresponding solution components
$u_h$. We observe strong refinements towards the corner $(0,0)$ and around the point $(\tfrac12,\tfrac12)$, which
coincides with the tip of the pyramid obstacle.

\section{Conclusions}
\label{sec:conclusions}

We derived a least-squares method for the classical obstacle problem and provided an a priori and a
posteriori analysis.
Moreover, we introduced and studied different variational inequalities using related bilinear forms.
All our methods are based on the first-order reformulation of the obstacle problem and provide approximations of the
displacement, its gradient and the reaction force.

\bibliographystyle{abbrv}
\bibliography{references}
\end{document}